\numberwithin{equation}{section}
\numberwithin{figure}{section}
\theoremstyle{plain}
\newtheorem{thm}{\protect\theoremname}
  \theoremstyle{plain}
  \theoremstyle{definition}
  \newtheorem{defn}[thm]{\protect\definitionname}
  \theoremstyle{plain}
  \newtheorem{cor}[thm]{\protect\corollaryname}
  \theoremstyle{plain}
  \newtheorem{prop}[thm]{\protect\propositionname}
\newcommand\xyR[1]{\xydef@\xymatrixrowsep@{#1}}
\newcommand\xyC[1]{\xydef@\xymatrixcolsep@{#1}}
\newcommand\thistheoremname{}
\newtheorem{genericthm}[thm]{\thistheoremname}
\newenvironment{namedthm}[1]
{\renewcommand\thistheoremname{#1}\begin{genericthm}}
{\end{genericthm}}
\date{}
  \providecommand{\conjecturename}{Conjecture}
  \providecommand{\corollaryname}{Corollary}
  \providecommand{\definitionname}{Definition}
  \providecommand{\propositionname}{Proposition}
\providecommand{\theoremname}{Theorem}
\begin{document}
\lhead{On Contraction of Algebraic Points}\rhead{Bogomolov and Qian}

\title{\textsc{On Contraction of Algebraic Points}}

\author{\textsc{Fedor Bogomolov} and \textsc{Jin Qian}}
\maketitle
\begin{quote}
\textsc{\small{}Abstract.}{\small{} We study contraction of points on
$\mathbb{P}^1(\bar{\mathbb{Q}})$ with certain control
on local ramification indices, with application to the unramified curve correspondence
problem initiated by Bogomolov and Tschinkel.}{\small \par}
\end{quote}

\section{Introduction}

In this paper we address the following problem: let $P$ be a subset of natural numbers and
$S_1$, $S_2$ be two subsets of points on $\mathbb{P}^1(\bar{\mathbb{Q}})$.
We say $S_1$ can be $P$-contracted to $S_2$ if there is a rational map $f: \mathbb{P}^1 \to \mathbb{P}^1$ such
that the image of $S_1$ under $f$ and all branch points of $f$ are contained in $S_2$ with all local ramification indices of $f$ belonging to $P$.

\

One motivation of our problem is coming from Belyi's theorem. In this language Belyi's theorem states
that if $P$ is the set of all natural numbers, then any finite subset $S_1\subset \mathbb{P}^1(\bar{\mathbb{Q}})$ 
can be $P$-contracted to $S_2=(0,1,\infty)$ or to any three points in $\mathbb{P}^1(\bar{\mathbb{Q}})$.

\

Another motivation is coming from the study of unramified correspondences between curves.
Following \cite{3}, we make the following definition:

\medskip{}

\begin{defn}By a curve, we mean a smooth projective curve over $\bar{\mathbb{Q}}$. 
When we write an affine equation for a curve, its smooth projective model is understood.
If $C\rightarrow C''$ and $C'\rightarrow C''$ are surjective morphisms of curves, 
by a compositum of $C$ and $C'$ over $C''$, we mean a curve whose function field is
a compositum of $\bar{\mathbb{Q}}(C)$ and $\bar{\mathbb{Q}}(C')$ over $\bar{\mathbb{Q}}(C'')$.
By an unramified cover of $C$, we mean a curve $\tilde{C}$ together with an $\acute{e}$tale 
morphsim $\tilde{C}\rightarrow C$. Let $C$, $C'$ be two curves. We call
$C$ lies over $C'$ and write $C \Rightarrow C'$ if there exists an unramified cover of $C$ which
admits a surjective map to $C'$. If $C$ lies over $C'$ and $C'$ also lies over $C$, we call $C$ and $C'$ are equivalent and write $C \Leftrightarrow C'$.  Finally, denote by $\mathsf{C}_n$ the curve:
$y^2=x^n-1$.

\end{defn}

\

In the study of such correspondence, an important step which is closely related to our contraction problem is
the construction of unramfied covers for which we need to find maps from various intermediate
curves to $\mathbb{P}^1$ or some elliptic curves with restrictions on local ramification indices
and the number of branch points. This method was established by Bogomolov and Tschinkel in \cite{3} where they have showed that any hyperbolic hyperelliptic curve lies over $\mathsf{C}_6$.

\

Here in section 2, our main results are:

\begin{thm}
If the only prime divisors of $n$ and $m$ are 2, 3 and 5, then
$\mathsf{C}_n \Leftrightarrow \mathsf{C}_m$ and for any $k\geq 5$ we have 
$\mathsf{C}_k \Rightarrow \mathsf{C}_n$.

\end{thm}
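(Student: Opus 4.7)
The plan is to deduce the second assertion from the first using the theorem of Bogomolov and Tschinkel recalled in the introduction, and to prove the first by chaining elementary equivalences built from the contraction framework. Granting the first assertion, $\mathsf{C}_6 \Leftrightarrow \mathsf{C}_n$ for every $\{2,3,5\}$-smooth $n$; combined with $\mathsf{C}_k \Rightarrow \mathsf{C}_6$ (the Bogomolov--Tschinkel statement, applicable because $\mathsf{C}_k$ for $k\geq 5$ is hyperbolic hyperelliptic) and the transitivity of $\Rightarrow$, this gives $\mathsf{C}_k \Rightarrow \mathsf{C}_n$.

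For the first assertion, it suffices by transitivity of $\Leftrightarrow$ to establish the elementary equivalences $\mathsf{C}_n \Leftrightarrow \mathsf{C}_{pn}$ for $p\in\{2,3,5\}$; any two $\{2,3,5\}$-smooth integers are then connected by a chain of such moves through a common multiple. The easy half $\mathsf{C}_{pn} \Rightarrow \mathsf{C}_n$ is realised by the natural surjection $\mathsf{C}_{pn}\to\mathsf{C}_n$, $(x,y)\mapsto(x^p,y)$, with $\mathsf{C}_{pn}$ acting as its own trivial unramified cover.

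For the hard half $\mathsf{C}_n \Rightarrow \mathsf{C}_{pn}$, I would look for an unramified cover $\tilde{\mathsf{C}}_n \to \mathsf{C}_n$ together with a morphism $f : \tilde{\mathsf{C}}_n \to \mathbb{P}^1$ for which every preimage of a $pn$-th root of unity carries even ramification index (with a matching parity condition at preimages of $\infty$). Then the double cover defined by $w^2 = f^{pn}-1$ is unramified over $\tilde{\mathsf{C}}_n$, hence over $\mathsf{C}_n$, while $(f,w)$ provides the desired surjection to $\mathsf{C}_{pn}$. Finding such an $f$ is an instance of the controlled-ramification question formalised by the contraction problem, now with $P = \{2^a 3^b 5^c\}$ and the relevant sets of points built from $\{0, \infty\}$ together with the $n$-th and $pn$-th roots of unity.

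The main obstacle is the explicit construction of $f$. The simplest attempt, taking $\tilde{\mathsf{C}}_n = \mathsf{C}_n$ and $f$ factored through the hyperelliptic projection, is ruled out by a direct Riemann--Hurwitz count: the branch budget of a map $\mathbb{P}^1 \to \mathbb{P}^1$ is too small to enforce even ramification at every non-$n$-th-root preimage of the $pn$-th roots of unity. I would therefore plan to pass to a suitable unramified cover $\tilde{\mathsf{C}}_n \to \mathsf{C}_n$ of sufficiently large genus and build $f$ compositely: start from $x\mapsto x^p$ (which contracts $pn$-th roots of unity to $n$-th roots of unity at the cost of spurious branches at $\{0,\infty\}$) and follow with an auxiliary map---potentially factoring through an intermediate elliptic quotient, as hinted at in the introduction---that absorbs those spurious branch points back into the $n$-th roots of unity with ramification indices confined to $\{2^a 3^b 5^c\}$. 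Carrying out this construction separately for each $p\in\{2,3,5\}$ is the technical heart of the argument.
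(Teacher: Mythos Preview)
Your reduction of the second assertion to the first via Bogomolov--Tschinkel is fine, and the map $(x,y)\mapsto(x^p,y)$ does give $\mathsf{C}_{pn}\Rightarrow\mathsf{C}_n$. The gap is that the ``hard half'' $\mathsf{C}_n\Rightarrow\mathsf{C}_{pn}$ is not proved: you describe a desideratum (an unramified cover carrying a map to $\mathbb{P}^1$ with prescribed parity of ramification over $\mu_{pn}\cup\{\infty\}$) and then defer its construction as ``the technical heart of the argument.'' That construction \emph{is} the theorem; without it nothing is established.

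Your decomposition also makes the problem harder than necessary, and this is where the paper's strategy differs from yours. Since Bogomolov--Tschinkel already gives $\mathsf{C}_n\Rightarrow\mathsf{C}_6$ for every hyperbolic hyperelliptic $\mathsf{C}_n$, one does not need the symmetric step $\mathsf{C}_n\Rightarrow\mathsf{C}_{pn}$ for \emph{arbitrary} $\{2,3,5\}$-smooth $n$; it suffices to prove the single one-sided chain $\mathsf{C}_6\Rightarrow\mathsf{C}_m$ for all such $m$. The paper does this by explicit, asymmetric moves---$\mathsf{C}_{8n}\Rightarrow\mathsf{C}_{16n}$, $\mathsf{C}_{16n}\Rightarrow\mathsf{C}_{24n}$, and $\mathsf{C}_{2^{11}3^{3}n}\Rightarrow\mathsf{C}_{5n}$---each obtained by writing down a concrete Belyi map (or an elliptic-curve spread using $E:y^2=x^3-x$ and its $2$- and $3$-torsion) with all local indices dividing a fixed $\{2,3,5\}$-smooth integer, and then applying Abhyankar's lemma to a compositum. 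The point is that these moves require the \emph{source} index to carry a large reservoir of $2$'s and $3$'s so that Abhyankar's divisibility condition is met; the chain therefore first climbs to $\mathsf{C}_{2^a3^b}$ with $a,b$ large and only then trades powers of $2,3$ for factors of $5$. Your proposed step $\mathsf{C}_n\Rightarrow\mathsf{C}_{pn}$ for, say, $n=5$ or $n=15$ has no such reservoir available and is not what is actually shown.
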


\begin{namedthm}{Remark}
\textup{Although Theorem 2 is also established in \cite{3}, the proof contains several gaps in the
construction of unramified covers. Based on the idea in \cite{3}, here we will use a different approach to establish this result.}
\end{namedthm}

\begin{thm}
If $n=2^a3^b5^c7^d$, then $\mathsf{C}_{6\cdot 13^d} \Rightarrow \mathsf{C}_n$.
\end{thm}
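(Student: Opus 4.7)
The case $d = 0$ reduces to $\mathsf{C}_6 \Rightarrow \mathsf{C}_{2^a 3^b 5^c}$, which is contained in Theorem~2. Assume henceforth $d \geq 1$; the new content is the factor $7^d$ on the target side.

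By transitivity of $\Rightarrow$, I would factor the claim as
\[
\mathsf{C}_{6 \cdot 13^d} \;\Rightarrow\; \mathsf{C}_{6 \cdot 7^d} \;\Rightarrow\; \mathsf{C}_{2^a 3^b 5^c \cdot 7^d},
\]
with the first arrow encoding a ``$13$-to-$7$ conversion'' and the second extending the $\{2,3,5\}$-smooth machinery of Theorem~2 to the $7$-power level.

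For the first arrow, the natural strategy is to exhibit a rational map $\psi: \mathbb{P}^1 \to \mathbb{P}^1$ sending the set of $6 \cdot 13^d$-th roots of unity into the set of $6 \cdot 7^d$-th roots of unity, with odd ramification index at each point of the source set and even ramification index at every other preimage of the target set. The fiber product $\mathsf{C}_{6 \cdot 7^d} \times_{\mathbb{P}^1} \mathbb{P}^1$ (formed via $\psi$) is then a double cover of $\mathbb{P}^1$ branched exactly at the $6 \cdot 13^d$-th roots of unity, hence isomorphic to $\mathsf{C}_{6 \cdot 13^d}$, and it is equipped with a natural surjection onto $\mathsf{C}_{6 \cdot 7^d}$. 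For the second arrow, I would take the \'etale cover $\tilde{C}$ of $\mathsf{C}_6$ that Theorem~2 furnishes together with its map to $\mathsf{C}_{2^a 3^b 5^c}$, base-change it along the degree-$7^d$ map $\mathsf{C}_{6 \cdot 7^d} \to \mathsf{C}_6$ given by $(x, y) \mapsto (x^{7^d}, y)$, and exploit the coprimality $\gcd(7, 2 \cdot 3 \cdot 5) = 1$ to twist the resulting cover so that its image acquires the factor $7^d$.

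The principal obstacle is the construction of $\psi$. A direct Riemann--Hurwitz count shows that the degree of $\psi$ is severely constrained by the relative sizes of the source and target branch sets (the ratio $13/7$ already limits what a single rational function can achieve), so the admissible ramification combinatorics are quite rigid. The construction is therefore delicate, and one likely needs to replace the single-map strategy by an inductive reduction to the base case $\mathsf{C}_{13} \Rightarrow \mathsf{C}_7$ (where a Hurwitz realization with degree up to $3$ is possible) combined with the Bogomolov--Tschinkel cyclic-cover deramification technique, assembling the desired \'etale cover as a fiber product with auxiliary cyclic covers of $\mathbb{P}^1$ ramified over compatible configurations of roots of unity. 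The key arithmetic input is the specific behavior of $13$ modulo $7$ (e.g.\ the relation $13 \equiv -1 \pmod 7$).
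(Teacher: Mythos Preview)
Your proposal has genuine gaps, and it misidentifies where the prime $13$ comes from.

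In the paper, the engine is not a map $\psi:\mathbb{P}^1\to\mathbb{P}^1$ relating $6\cdot 13^d$-th and $6\cdot 7^d$-th roots of unity, nor any congruence like $13\equiv -1\pmod 7$. The key input is Proposition~12: an \emph{explicit Belyi map} $h:\mathsf{C}_7\to\mathbb{P}^1$ branched only over $\{0,1,\infty\}$, obtained by first contracting the $7$-th roots of unity by a short chain of low-degree maps and then applying a single rational function (found by search via Belyi's formula) whose ramification indices all divide $2^{15}3^{10}5^4\cdot 13$. The prime $13$ is simply the one extra prime that survives in those ramification indices; it has no modular significance relative to $7$. Abhyankar's lemma then converts this Belyi map into Proposition~13: $\mathsf{C}_{2^{16}3^{10}5^4\cdot 13\,n}\Rightarrow\mathsf{C}_{7n}$ for every $n\ge 1$. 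Iterating this $d$ times trades a factor $(2^{16}3^{10}5^4\cdot 13)^d$ for $7^d$, and the remaining $\{2,3,5\}$-smooth factors (and the initial step $\mathsf{C}_{6\cdot 13^d}\Rightarrow\mathsf{C}_{2^{16d}3^{10d}5^{4d}13^d m}$) are handled by the machinery behind Theorem~2 (Propositions~7 and~10, which work for arbitrary $n$ and hence preserve the $13^d$ factor).

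Against this, your plan never produces the map $\psi$; you yourself note the Riemann--Hurwitz obstruction and retreat to an unproved base case $\mathsf{C}_{13}\Rightarrow\mathsf{C}_7$ (the paper only establishes $\mathsf{C}_{6\cdot 13}\Rightarrow\mathsf{C}_7$, and by a completely different mechanism). The second arrow $\mathsf{C}_{6\cdot 7^d}\Rightarrow\mathsf{C}_{2^a3^b5^c7^d}$ is also not justified: pulling back an \'etale cover $\tilde C\to\mathsf{C}_6$ along $\mathsf{C}_{6\cdot 7^d}\to\mathsf{C}_6$ gives an \'etale cover of $\mathsf{C}_{6\cdot 7^d}$ mapping to $\mathsf{C}_{2^a3^b5^c}$, not to $\mathsf{C}_{2^a3^b5^c7^d}$, and the ``coprimality twist'' you invoke to insert the factor $7^d$ into the target is not an actual construction. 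The paper avoids this issue entirely by never passing through $\mathsf{C}_{6\cdot 7^d}$: each application of Proposition~13 already carries the growing power of $7$ inside the parameter $n$.
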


\

In \cite{3}, Bogomolov and Tschinkel have made the conjecture that the curve
$\mathsf{C}_6$ lies over any other curve. The reason why we are interested in the family of curves
\{$\mathsf{C}_n$\} is that the Bogomolov-Tschinkel conjecture will hold if $\mathsf{C}_6$ lies over
$\mathsf{C}_n$ for any positive integer $n$ (See Proposition 17). Towards this conjecture, in section 2 and section 3 we introduce the notion of contracting a finite given subset of $\mathbb{P}^1(\bar{\mathbb{Q}})$
into another finite subset of $\mathbb{P}^1(\bar{\mathbb{Q}})$ with restrictions on the local ramification indices (See Definition 15) and the notion of contracting a finite subset of $\mathbb{P}^1(\bar{\mathbb{Q}})$ to a four-point subset of $\mathbb{P}^1(\bar{\mathbb{Q}})$ via elliptic curves
 (See Definition 31). We have obtained some criterions for a curve $C$ with $\mathsf{C}_6$ lying over $C$ (See Theorem 16, Theorem 32, Corollary 33). In section 4, we will propose a procedure to approach the
 Bogomolov-Tschinkel conjecture.

\

\section{\label{section 2} Unramified Correspondences over $\bar{\mathbb{Q}}$}

{\bf{Notations}}. Let $f: C\rightarrow C'$ be a surjective morphism of curves. We denote by Bran($f$) the branch locus of $f$ and denote by Ram($f$) the ramification points of $f$. For a point $y
\in$ Bran($f$), $x\in f^{-1}(y)$, denote by $e(x|y)$ the local ramification index of $x$ at $y$. For a set of four points $a,b,c,d \in  \bar{\mathbb{Q}}$, we denote by $E(a,b,c,d)$ an elliptic curve branched over $\{a,b,c,d\}$. 

\

In this section, we will establish some results about the unramified curve correspondence problem.
The key tool is:

\

\noindent {\bf{Abyhankar's Lemma}}. Let $f: C\rightarrow C''$ and $g: C'\rightarrow C''$ be surjective morphisms of curves. Denote by $\hat{C}$ the compositum of $C$ and $C'$ over $C''$ with corresponding map $h$
and $l$:
\[
\xymatrix{ & \hat{C} \ar[dl]^{h} \ar[dr]^{l} \\
C \ar[dr]^{f} & & C'\ar[dl]^{g} \\
 & C''
}
\]

\noindent Assume $x\in C$ and $y\in C'$ such that $f(x)=g(y)=z$ for some point $z$ on $C''$. 
Suppose $f^{-1}(z) = \{x_1, ... , x_s\}$, $g^{-1}(z) = \{y_1, ... , y_t\}$ and denote by $d$ the greatest common
divisor of $e(x_i|z)$ for $i=1, ... ,s$. If for any $j$, we have:
\begin{equation*}
e(y_j|z) \ |\ d.
\end{equation*}
Then for any $i$, $x_i$ is unramified under $h$ and for any $j$ and
any point $a\in l^{-1}(y_j)$ we have:
\begin{equation*}
e(a|y_j)=\frac{e(h(a)|z)}{e(y_j|z)}.
\end{equation*}

\noindent In particular, if for all points $x\in C$ and $y\in C'$ with $f(x)=g(y)$ we have:
\begin{equation*}
e(y|g(y)) \ | \  e(x|f(x)).
\end{equation*}
Then a compositum of $C$
and $C'$ over $C''$ is an unramified cover of $C$. 

\begin{proof}
This follows from Theorem 3.9.1 in \cite{10}.
\end{proof}

\

In the following proofs, our main strategy is to construct the unramified covers of curves directly via Abhyankar's lemma. In order to make such constructions using Abhyankar's lemma, we will explicitly contract some cyclotomic roots and also use some special elliptic curves to contract and spread points. 

\

\begin{prop}
 Let $H$ be a hyperbolic hyperelliptic curve. Then $H \Rightarrow \mathsf{C}_6$.
\end{prop}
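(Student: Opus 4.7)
The plan is to realize $H \Rightarrow \mathsf{C}_6$ by applying Abhyankar's lemma to a compositum of $H$ and $\mathsf{C}_6$ over $\mathbb{P}^1$, glued along a carefully chosen rational map $\phi : \mathbb{P}^1 \to \mathbb{P}^1$. Write $\pi_H : H \to \mathbb{P}^1$ for the hyperelliptic double cover, with branch locus $B$ of size $2g+2 \geq 6$, and $\pi_6 : \mathsf{C}_6 \to \mathbb{P}^1$ for the standard double cover branched over $\mu_6$. The whole argument reduces to producing a rational $\phi : \mathbb{P}^1 \to \mathbb{P}^1$ such that
\[
\text{for every } w \in \phi^{-1}(\mu_6), \text{ either } w \in B \text{ or } e(w \mid \phi(w)) \text{ is even.}
\]

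Granted such a $\phi$, let $\hat{H}$ be a compositum of $H$ and $\mathsf{C}_6$ over $\mathbb{P}^1$ formed via $\phi \circ \pi_H$ and $\pi_6$. For any $z \in \mu_6$ and $x \in H$ with $(\phi \circ \pi_H)(x) = z$, the product formula for ramification gives $e(x \mid z) = e(x \mid \pi_H(x)) \cdot e(\pi_H(x) \mid z)$, which is even in both cases: either $\pi_H(x) \in B$, forcing the first factor to be $2$; or $\pi_H(x) \notin B$ and $\pi_H(x) \in \phi^{-1}(\mu_6)$, so by the hypothesis on $\phi$ the second factor is even. Hence $2 = e(y \mid z)$ for any $y \in \pi_6^{-1}(z)$ divides $e(x \mid z)$. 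Over $z \notin \mu_6$ the map $\pi_6$ is unramified, so Abhyankar's condition is trivial. The ``in particular'' clause of Abhyankar's lemma then yields $\hat{H} \to H$ unramified, while $\hat{H} \to \mathsf{C}_6$ is surjective once $\phi$ is nonconstant, establishing $H \Rightarrow \mathsf{C}_6$.

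The construction of $\phi$ is carried out in two stages. First, apply Belyi's theorem to obtain $\psi : \mathbb{P}^1 \to \mathbb{P}^1$ with $\psi(B) \subseteq \{0, 1, \infty\}$, strengthened to a ``$2$-balanced'' form in which every $\psi$-preimage of $\{0, 1, \infty\}$ not lying in $B$ is ramified of even order; this can be arranged by post-composing a plain Belyi map with $t \mapsto 4t(1-t)$, which automatically makes all preimages of $1$ and $\infty$ even-ramified, and iterating until only preimages in $B$ can still have odd ramification. Second, pick $\eta : \mathbb{P}^1 \to \mathbb{P}^1$ sending $\{0, 1, \infty\}$ into $\mu_6$ and whose preimages of $\mu_6$ outside $\{0, 1, \infty\}$ are all even-ramified; a natural candidate is $\eta(t) = F(t^2)$ for a Möbius $F$ with $F(0), F(1), F(\infty) \in \mu_6$, since the outer squaring doubles every ramification index at points other than $0$ and $\infty$. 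Setting $\phi = \eta \circ \psi$ and computing ramification by the chain rule verifies the required property.

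The main obstacle is this ramification bookkeeping. A naive Belyi $\psi$ has many unramified preimages of $\{0, 1, \infty\}$ outside $B$, each of which would become a non-$B$ preimage of $\mu_6$ under $\phi$ with odd ramification index and hence break Abhyankar's hypothesis, unless absorbed by the extra ramification of $\eta$ or eliminated by refining $\psi$. Ensuring every such rogue preimage ends up even-ramified, without introducing fresh branch points outside $\mu_6$ that would spoil the construction, is the delicate step and is precisely where the explicit ``contraction of cyclotomic roots'' technique flagged in the preamble of this section does its work.
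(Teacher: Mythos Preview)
The paper's own proof is a one-line citation to \cite{3}, so there is little to compare against directly; the actual Bogomolov--Tschinkel argument (and its analogue in Proposition~6 here) proceeds via elliptic curves and multiplication maps, not through a Belyi-type contraction.

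Your reduction to finding a rational $\phi$ with the stated parity condition on $\phi^{-1}(\mu_6)$ is correct, and the Abhyankar step is fine. The gap is that neither stage of your construction of $\phi$ actually works.

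For the second stage, take $\eta(t)=F(t^{2})$ with $F$ M\"obius. Since $F$ is everywhere unramified, the ramification of $\eta$ is exactly that of $t\mapsto t^{2}$: index~$2$ at $0$ and $\infty$, index~$1$ everywhere else. Writing $F^{-1}(\mu_{6})=\{0,1,\infty,a,b,c\}$, one has
\[
\eta^{-1}(\mu_{6})=\{0,\infty\}\cup\{\pm 1,\pm\sqrt a,\pm\sqrt b,\pm\sqrt c\},
\]
and every point in the second set has $e_\eta=1$. In particular $-1,\pm\sqrt a,\pm\sqrt b,\pm\sqrt c\in\eta^{-1}(\mu_{6})\setminus\{0,1,\infty\}$ are all odd-ramified, contradicting what you need. (The sentence ``the outer squaring doubles every ramification index at points other than $0$ and $\infty$'' is false: away from $0,\infty$ the squaring contributes index~$1$.) Pulling these seven points back through $\psi$ produces many elements of $\phi^{-1}(\mu_{6})\setminus B$ with $e_\phi=1$.

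For the first stage, post-composing a Belyi map $\psi$ with $g(t)=4t(1-t)$ gives $g^{-1}(1)=\{1/2\}$ and $g^{-1}(\infty)=\{\infty\}$, both of index~$2$, but $g^{-1}(0)=\{0,1\}$ with indices~$1,1$. Hence $(g\circ\psi)^{-1}(0)=\psi^{-1}(\{0,1\})$ with \emph{unchanged} ramification indices. Iterating $g$ never alters these: for every $k\ge 1$ one has $(g^{\circ k})^{-1}(0)\supseteq\{0,1\}$ with index~$1$ at both, so $(g^{\circ k}\circ\psi)^{-1}(0)\supseteq\psi^{-1}(\{0,1\})$ with the original odd indices intact. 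No amount of iteration produces the ``$2$-balanced'' $\psi$ you claim; varying the degree-$2$ Belyi post-composition only moves the bad fiber among $\{0,1,\infty\}$ rather than emptying it.

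You flag this bookkeeping as ``the delicate step'' and defer to the contraction technique mentioned in the section preamble, but that technique is precisely the elliptic-curve machinery of \cite{3} (cf.\ Proposition~6), which is a different mechanism from post-composing rational maps on $\mathbb{P}^1$. As written, the construction of $\phi$ does not go through.
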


\begin{proof}
This is one part of Propostition 2.4 in \cite{3}.
\end{proof}

\begin{prop} 
Let $H$ be a hyperbolic hyperelliptic curve. Then $H \Rightarrow \mathsf{C}_8$.
\end{prop}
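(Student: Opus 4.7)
The plan is to imitate Proposition 3's fiber-product argument with $\mathsf{C}_8$ replacing $\mathsf{C}_6$. Write $H: y^2 = f(x)$ and let $B \subset \mathbb{P}^1(\bar{\mathbb{Q}})$ be the branch locus of the hyperelliptic projection $p_H: H \to \mathbb{P}^1$, so that $|B| \geq 6$ and every ramification index of $p_H$ equals $2$; hence $\gcd_x e(x|z) = 2$ for $z \in B$ and $1$ otherwise. Let $p: \mathsf{C}_8 \to \mathbb{P}^1$ be the hyperelliptic projection of $\mathsf{C}_8$, ramified only over $\mu_8$ with indices $2$. I would aim to build a rational map $\phi: \mathbb{P}^1 \to \mathbb{P}^1$ so that $\pi = \phi \circ p: \mathsf{C}_8 \to \mathbb{P}^1$ satisfies the divisibility hypothesis of Abhyankar's lemma against $p_H$; then the compositum of $p_H$ and $\pi$ over $\mathbb{P}^1$ is an unramified cover of $H$ that surjects onto $\mathsf{C}_8$, which is exactly $H \Rightarrow \mathsf{C}_8$.

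Because the ramification index of $\pi$ at $q \in \mathsf{C}_8$ is $e_p(q)\,e_\phi(p(q))$, the Abhyankar condition unfolds into four requirements on $\phi$: (i) $\phi(\mu_8) \subseteq B$; (ii) $\phi$ is unramified at every point of $\mu_8$; (iii) $\mathrm{Branch}(\phi) \subseteq B$; and (iv) every local ramification index of $\phi$ lies in $\{1,2\}$. To construct such $\phi$, I would first apply a Mobius transformation to place three chosen elements of $B$ at $0,1,\infty$, and then assemble $\phi$ out of rational factors of the shape $(x-\alpha)^2/(x-\beta)^2$ with $\alpha,\beta \in B$, together with clean-Belyi-type substitutions such as $x \mapsto 4x(1-x)$ to contract the remaining points of $B$ into $\{0,1,\infty\}$. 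The double-power form contributes only ramification indices equal to $2$ over $0$ and $\infty$, and layered composition propagates the bound $e_\phi \leq 2$ while producing a map whose branch locus lands inside $B$. A final Mobius adjustment is inserted so that no $8$th root of unity coincides with one of the distinguished critical or ramified points of the resulting $\phi$, which is what enforces (ii).

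The principal obstacle is enforcing (ii) and (iv) simultaneously: the Riemann--Hurwitz identity $\sum (e_p - 1) = 2\deg\phi - 2$ is tight once $e_\phi \in \{1,2\}$, so each point of $\mu_8$ at which $\phi$ must be unramified consumes a full unit of flexibility, while condition (i) forces $\phi$ to carry $\mu_8$ across a prescribed contraction. In Proposition 3 the corresponding construction is greatly aided by the divisibility of $6$ by both $2$ and $3$, which lets $\mu_6$ pair naturally with the three-point Belyi target $\{0,1,\infty\}$; for $\mathsf{C}_8$ that numerical coincidence disappears, and I expect the right remedy, in the spirit of \cite{3}, is to factor $\phi$ through an intermediate cover exploiting the $D_4$-symmetry of $\mu_8$, for instance by squaring $x \mapsto x^2$ to reduce $\mu_8$ to $\mu_4$ and then combining with a cover adapted to $\mu_4$ and the already-contracted image of $B$. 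Once such a $\phi$ is in hand, the Abhyankar verification is mechanical and yields the claimed $H \Rightarrow \mathsf{C}_8$.
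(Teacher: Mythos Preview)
Your reduction to conditions (i)--(iv) on a rational map $\phi:\mathbb{P}^1\to\mathbb{P}^1$ is correct, but the proposal contains a genuine gap: you never construct $\phi$, and the sketch you give is internally inconsistent about source and target. You write that $\phi$ should be built from factors $(x-\alpha)^2/(x-\beta)^2$ with $\alpha,\beta\in B$, but $\alpha,\beta$ are zeros and poles of $\phi$ and hence lie in the \emph{source} copy of $\mathbb{P}^1$, whereas $B$ is the branch locus of $p_H$ in the \emph{target}. Similarly, the phrase ``contract the remaining points of $B$ into $\{0,1,\infty\}$'' describes a map \emph{out of} the target, not into it. Once the directions are straightened out, condition (i) becomes eight algebraic constraints $\phi(\zeta)\in B$ for $\zeta\in\mu_8$, with $B$ an arbitrary six-point set determined by $H$; nothing in your outline explains why such $\phi$ with $e_\phi\le 2$ and $\mathrm{Bran}(\phi)\subseteq B$ should exist for generic $B$. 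The $x\mapsto x^2$ reduction to $\mu_4$ only postpones the same problem.

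The paper's proof avoids this difficulty entirely by \emph{not} taking a single compositum of $H$ and $\mathsf{C}_8$ over $\mathbb{P}^1$. Instead it inserts an intermediate elliptic curve $E$ double-covering $\mathbb{P}^1$ branched at four of the six points of $B$; the compositum $C_1$ of $H$ and $E$ is then automatically unramified over $H$. The point of passing through $E$ is that one can compose with multiplication-by-$2$ and a translation on $E$ before projecting again, which manufactures ramification of index $4$ (not merely $2$) over three chosen points $\{0,1,\infty\}$. On the other side, $\mathsf{C}_8$ admits an explicit Belyi map $f_3\circ f_2\circ f_1$ (with $f_2=x^4$ and $f_3=((x-1)/(x+1))^2$) branched over $\{0,1,\infty\}$ with all indices equal to $4$. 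Abhyankar's lemma then applies at the second stage with $4\mid 4$ rather than the impossible $?\mid 2$ your setup demands. The elliptic-curve amplification step is the missing idea.
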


\begin{proof}
Consider the following diagrams:
\[
\xymatrix{ & & C_2 \ar[ld]^{g_1} \ar[rrdd]^{g_7}\\
 & C_1 \ar[ld]^{g_2} \ar[rd]^{g_3} \\
 H \ar[rd]^{g_4} & & E \ar[ld]^{g_5} \ar[rd]^{g_8}& & \mathsf{C}_8 \ar[ld]^{g_6} \\
 & \mathbb{P}^1 & &\mathbb{P}^1 \\
}
\]

and

\[
\xymatrix{ \mathsf{C}_8 \ar[r]^{f_1}&  \mathbb{P}^1 \ar[r]^{f_2}& \mathbb{P}^1 \ar[r]^{f_3}& \mathbb{P}^1}.
\]
\

Denote $f_3 \circ f_2 \circ f_1$ by $f$.
In these diagrams:

(i) The map $f_1$ is the standard degree 2 projection with Bran$(f_1)$ containing all 8th roots
of unity with local ramification indices being 2;

(ii) The map $f_2$ is $x^4$;

(iii) The map $f_3$ is $(\frac{x-1}{x+1})^2$;

(iv) The map $g_4$ is the standard degree 2 projection which has 6 branch points;

(v) The map $g_6$ is $f$. Bran($g_6$)=$\{0,1,\infty\}$ with all local ramification indices being 4;

(vi) $E$ is an elliptic curve branched at 4 points of Bran$(g_4)$;

(vii) The map $g_8$ is the standard degree 2 projection combined with an automorphism
of $\mathbb{P}^1$
such that Bran$(g_8)$ contains $\{0,1,\infty\}$;

(viii) The map $g_5$ is a composition of a multiplication-by-2 map, a translation-by-$R$ map
and the standard degree 2 projection such that the image of $R$ under the
standard degree 2 projection is a point in Bran$(g_4)$
which is different from the 4 points in (vi);

(ix) The curve $C_1$ is a compositum of $H$ and $E$ over $\mathbb{P}^1$. 
Since Bran$(g_5)$ consists of the image of two-torsion points of $E$ under the standard projection, by (iv) and (v) we see that
$C_1$ is an unramified cover of $H$;

(x) The curve $C_2$ is a compositum of $C_1$ and $\mathsf{C}_8$ over $\mathbb{P}^1$. Note that in (viii) all two torsion points of $E$ are mapped to a point in Bran$(g_4)$
which is different from the 4 points in (vi). By Abyhankar's lemma, these points are
in the branch locus of $g_3$ with local ramification indices being 2. Thus,
Bran$(g_8 \circ g_3)$ contains 0,1 and $\infty$ with local ramification indices being 4.
By Abhyankar's lemma, we have:
$C_2$ is an unramified cover of $C_1$. Combined with (ix), we see that $C_2$ is an unramified cover of $H$ which maps surjectively onto $\mathsf{C}_8$.

\end{proof}

\begin{prop}
$\mathsf{C}_{8n} \Rightarrow \mathsf{C}_{16n}$ and 
$\mathsf{C}_{16n} \Rightarrow \mathsf{C}_{24n}$  \ for $n \geq 1$.
\end{prop}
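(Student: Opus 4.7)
The plan is to mimic the construction of Proposition 5 for both statements. In each case I will build an étale cover $C_1$ of the source curve ($\mathsf{C}_{8n}$ or $\mathsf{C}_{16n}$) by taking a compositum with an auxiliary elliptic curve $E$ over $\mathbb{P}^1$, and then take a further compositum with the target curve ($\mathsf{C}_{16n}$ or $\mathsf{C}_{24n}$) over a second copy of $\mathbb{P}^1$ in which the target's branch locus has been contracted to $\{0,1,\infty\}$; both steps will be controlled via Abhyankar's Lemma.

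For $\mathsf{C}_{8n}\Rightarrow \mathsf{C}_{16n}$, I take $E$ with hyperelliptic projection $\pi_E:E\to\mathbb{P}^1$ branched at four $8n$-th roots of unity (for instance $\pm 1,\pm i$, available since $4\mid 8n$), and set $g_5=\pi_E\circ T_R\circ[2]$ with $R\in E$ chosen so that $\pi_E(R)$ is an $8n$-th root of unity outside $\mathrm{Bran}(\pi_E)$. Exactly as in Proposition 5, this forces $\mathrm{Bran}(g_5)=\mathrm{Bran}(\pi_E)$ and collapses $E[2]$ to the single point $\pi_E(R)$ under $g_5$. Abhyankar's Lemma applied to $(\pi_{8n},g_5)$ then makes $C_1:=\mathsf{C}_{8n}\times_{\mathbb{P}^1}E$ an étale cover of $\mathsf{C}_{8n}$, while the same lemma on the $E$-side shows that the projection $g_3:C_1\to E$ has ramification index $2$ at every preimage of $\pi_E(R)$, in particular at each of the four points of $E[2]$.

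Next I choose a second map $g_8:E\to\mathbb{P}^1_2$ --- the standard degree $2$ projection of $E$ post-composed with a Möbius automorphism --- so that $\mathrm{Bran}(g_8)$ contains $\{0,1,\infty\}$ with the corresponding Weierstrass points lying in $E[2]$; and I construct $g_6:\mathsf{C}_{16n}\to\mathbb{P}^1_2$ with $\mathrm{Bran}(g_6)\subseteq\{0,1,\infty\}$ by composing $\pi_{16n}$ with rational contractions such as $x^{8n}$ and $((x-1)/(x+1))^2$, in analogy with the formula used for $\mathsf{C}_8$ in Proposition 5. Since $g_3$ has ramification $2$ at the relevant Weierstrass points and $g_8$ also has ramification $2$ there, the composition $g_8\circ g_3:C_1\to\mathbb{P}^1_2$ has ramification index $2\cdot 2=4$ above each of $\{0,1,\infty\}$. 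Provided the ramification indices of $g_6$ at $\{0,1,\infty\}$ divide $4$, Abhyankar's Lemma applied to $(g_8\circ g_3,g_6)$ makes $C_2:=C_1\times_{\mathbb{P}^1_2}\mathsf{C}_{16n}$ étale over $C_1$; combined with $C_1\to\mathsf{C}_{8n}$ étale and the surjection $C_2\to\mathsf{C}_{16n}$ from the second factor, this yields $\mathsf{C}_{8n}\Rightarrow\mathsf{C}_{16n}$. The argument for $\mathsf{C}_{16n}\Rightarrow\mathsf{C}_{24n}$ runs along exactly the same lines, with $E$ branched at four $16n$-th roots of unity and the contraction adjusted to reflect the factor of $3$ in $24n=3\cdot 8n$ (for instance replacing $x^{8n}$ by $x^{16n}$ followed by a $\mu_3$-symmetric rational map).

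The main obstacle I anticipate lies in the very last step: verifying that the contraction $g_6$ can be chosen with all ramification indices at $\{0,1,\infty\}$ dividing $4$. With the naive choice $((x-1)/(x+1))^2\circ x^{8n}\circ\pi_{16n}$, one obtains $g_6$ with ramification $4$ at $0$ and $\infty$ but ramification $8n$ at the middle branch (the image of the critical values $0,\infty$ of $x^{8n}$), and $8n>4$ for every $n\geq 1$. Overcoming this will require either a more careful design of the contraction --- perhaps spreading the branch locus of $g_6$ over additional points of $\mathbb{P}^1_2$ so that each branch carries ramification dividing $4$ --- or enlarging the $C_1$-side by taking further compositums with additional auxiliary elliptic curves, so that the map to $\mathbb{P}^1_2$ acquires ramification a multiple of $8n$ at the offending branch. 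This ramification-matching step is where the bulk of the technical work should concentrate.
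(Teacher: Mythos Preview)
Your overall architecture (iterated fibre products governed by Abhyankar's Lemma) is the same as the paper's, and you have correctly isolated the obstruction: with your construction the composite $g_8\circ g_3:C_1\to\mathbb{P}^1_2$ has ramification exactly $4$ over each of $\{0,1,\infty\}$, whereas any three-point contraction $g_6$ of $\mathsf{C}_{16n}$ is forced to carry an index $8n$ somewhere.  Neither of your suggested repairs is carried out, and the first one cannot work as written: if you spread $\mathrm{Bran}(g_6)$ beyond $\{0,1,\infty\}$ you must also control the ramification of $g_8\circ g_3$ over those new points, but $g_8^{-1}(\{0,1,\infty\})$ already exhausts the Weierstrass points you have arranged, and elsewhere nothing is pinned down.

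The idea you are missing is to discard the degree-$2$ hyperelliptic projection $\pi_{8n}$ altogether and use instead the \emph{other} projection $y:\mathsf{C}_{8n}\to\mathbb{P}^1$, of degree $8n$.  This map is branched at exactly three points, with profile $(8n),(8n),(4n,4n)$; after a M\"obius change, $\mathrm{Bran}(f_1)=\{0,1,\infty\}$ with those indices.  The paper pairs this with the fixed curve $E:y^2=x^3-x$ and the map $F_1=(x\mapsto x^2)\circ\pi_E\circ T_{(1,0)}\circ[2]:E\to\mathbb{P}^1$, which has $\mathrm{Bran}(F_1)=\{0,1,\infty\}$ with indices $(4,2,4)$ and, crucially, $E[2]\subset F_1^{-1}(1)$.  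Since $4\mid 4n$ and $2\mid 8n$, the compositum $C_1=\mathsf{C}_{8n}\times_{\mathbb{P}^1}E$ is \'etale over $\mathsf{C}_{8n}$, and Abhyankar on the $E$-side now gives ramification $8n/2=4n$ (not $2$) of $C_1\to E$ at every point of $E[2]$.  Composing with the standard projection of $E$ therefore produces ramification $8n$ over $\{0,\pm1,\infty\}$, which absorbs the indices $(4,8n,4)$ of your naive $g_6$ and finishes the first claim.

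The second claim is \emph{not} a cosmetic variant of the first, contrary to your last paragraph.  The paper again uses the $y$-projection and the same $E$, but replaces $[2]$ by $[3]$ so that $E[3]\subset F_1^{-1}(1)$, and then inserts an additional compositum with the elliptic curve $\mathsf{C}_3$ via its multiplication-by-$3$ map to manufacture the factor $3$; only after this extra layer does one take the fibre product with $\mathsf{C}_{12n}$.  Your ``$\mu_3$-symmetric rational map'' suggestion does not supply this mechanism.
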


\begin{proof}
First, let us show: $\mathsf{C}_{8n} \Rightarrow \mathsf{C_{16n}}$ for $n \geq 1$:

Consider the following diagrams:
\[
\xymatrix{& & C_2\ar[dl]^{f_{13}} \ar[ddrr]^{f_{12}} & & \\
 & C_1\ar[dl]^{f_9} \ar[dr]^{f_{10}} \\
 \mathsf{C}_{8n}\ar[dr]^{f_1} & & E\ar[ld]^{F_1} \ar[dr]^{f_6} & &\mathsf{C}_{16n} \ar[dl]^{F_2}\\
  & \mathbb{P}^1 & & \mathbb{P}^1\\}
\]
\[
\xymatrix{E\ar[r]^{f_5}& E\ar[r]^{f_4}& E\ar[r]^{f_3} &\mathbb{P}^1 \ar[r]^{f_2} 
&\mathbb{P}^1 \\ }
\]
\[
\xymatrix{\mathsf{C}_{16n}\ar[r]^{f_{11}}& \mathbb{P}^1 \ar[r]^{f_8} 
&\mathbb{P}^1 \ar[r]^{f_7} & \mathbb{P}^1 \\ }
\]

\

In these diagrams:

(i) The map $f_2$ is $x^2$;

(ii) The curve $E$ is defined by: $y^2=x^3-x$ and $f_3$ is the standard projection;

(iii) The map $f_4$ is the translation-by-$R$ map where $R=(1,0)$;

(iv) The map $f_5$ is the multiplication-by-2 map;

(v) The map $F_1$ is $f_2 \circ f_3 \circ f_4 \circ f_5$;

(vi) The map $f_6$ is the standard degree 2 projection;

(vii) The map $f_7$ is $(\frac{x-1}{x+1})^2$; 

(viii) The map $f_8$ is $x^{8n}$;

(ix) The map $f_{11}$ is the standard degree 2 projection;

(x) The map $F_2$ is $f_7\circ f_8 \circ f_{11}$. Bran($F_2$) $=\{0,1,\infty\}$
with corresponding local ramification indices being 4, 8$n$, 4;

(xi) The map $f_1$ is the projection $y$ composed with an automorphism of $\mathbb{P}^1$ which maps three 
branch points to 0,1 and $\infty$ such that $f_1^{-1}(1)$ and $f_1^{-1}(\infty)$ each contains one point
with ramification index 8n and $f_1^{-1}(0)$ contains two points with ramification indices 4n;

(xii) The curve $C_1$ is a compositum of $\mathsf{C}_{8n}$ and $E$ 
over $\mathbb{P}^1$ (via map $f_1$ and $F_1$);

(xiii) The curve $C_2$ is a compositum of $C_1$ and $\mathsf{C}_{16n}$ over
$\mathbb{P}^1$ (via map 
$f_6 \circ f_{10}$ and $F_2$).

\

We see that:

(1) Since Bran($F_1$)=($0,1,\infty$) with local ramification indices: $2, 4, 4$ (over $1, 0, \infty$ 
respectively), combined with (xi) we get: $f_9$ is unramified and each point of 
$F_1^{-1}(1)$ has ramification index $4n$ under $f_{10}$. Note that: 
$E[2]$ is contained in $F_1^{-1}(1)$;

(2) By (1), Bran($f_6\circ f_{10}$)=($0,1,-1,\infty$) with all local ramification 
indices being $8n$;
 
(3) By (2) and (x), $f_{13}$ is unramified. Combined with (1) we have:
\begin{equation*}
 \mathsf{C}_{8n} \Rightarrow \mathsf{C_{16n}}.
 \end{equation*}

\medskip{}
Next let us show: $\mathsf{C}_{8n} \Rightarrow \mathsf{C}_{12n}$ for $n \geq 1$ and $n$ even:

(which is the same as $\mathsf{C}_{16n} \Rightarrow \mathsf{C}_{24n}$ for $n \geq 1$)

Consider the following diagrams:

\[
\xymatrix{ & & & C\ar[dl]^{f_{14}} \ar[dddrrr]^{f_{15}} \\
& & C_2\ar[dl]^{f_{12}} \ar[ddrr]^{f_{11}} & & \\
 & C_1\ar[dl]^{f_9} \ar[dr]^{f_{10}} \\
 \mathsf{C}_{8n}\ar[dr]^{f_1} & & E\ar[ld]^{F_1} \ar[dr]^{f_6} & &\mathsf{C}_{3} \ar[dl]^{f_7}
 \ar[dr]^{f_8}
 & & \mathsf{C}_{12n}\ar[dl]^{f_{13}} \\
  & \mathbb{P}^1 & & \mathbb{P}^1 & & \mathbb{P}^1\\}
\]

\

\[
\xymatrix{E\ar[r]^{f_5}& E\ar[r]^{f_4}& E\ar[r]^{f_3} &\mathbb{P}^1 \ar[r]^{f_2} 
&\mathbb{P}^1 \\ }
\]

\

In these diagrams:

(i) The map $f_2$ is $x^2$;

(ii) The curve $E$ is defined by: $y^2=x^3-x$ and $f_3$ is the standard projection;

(iii) The map $f_4$ is the translation-by-$R$ map where $R=(1,0)$;

(iv) The map $f_5$ is the multiplication-by-3 map;

(v) The map $f_6$ is the standard projection combined with an automorphism of $\mathbb{P}^1$ such that:
$f_6(E[3])$ is the union of one point (this point is denoted by $a$) from Bran($f_6$) and ($1,\zeta _3, \zeta _3^2, \infty$);

(vi) The map $f_7$ is the multiplication-by-3 map combined with the standard projection; 

(vii) The map $f_8$ is $y$ combined with an automorphim of $\mathbb{P}^1$ which maps the three branch points 
to $0,1,\infty$;

(viii) The map $f_1$ is the projection $y$ composed with an automorphism of $\mathbb{P}^1$ which maps three 
branch points to 0,1 and $\infty$ such that $f_1^{-1}(1)$ and $f_1^{-1}(\infty)$ each contains one point
with ramification index 8n and $f_1^{-1}(0)$ contains two points with ramification indice 4n;

(ix) The curve $C_1$ is a compositum of $\mathsf{C}_{8n}$ and $E$
over $\mathbb{P}^1$ (via $f_1$ and $f_2
\circ f_3 \circ f_4\circ f_5$);

(x) The curve $C_2$ is a compostitum of $C_1$ and $\mathsf{C}_{3}$ over 
$\mathbb{P}^1$ (via 
$f_6 \circ f_{10}$ and $f_7$);

(xi) The curve $C$ is a compositum of $C_2$ and $\mathsf{C}_{12n}$ (via $f_8\circ f_{11}$ and $f_{13}$);

(xii) The map $f_{13}$ is the standard projection to $\mathbb{P}^1$ composed with $x^{6n}$ and $(\frac{x-1}{x+1})^2$.
Bran($f_{13}$)=($0,1,\infty$) and the corresponding ramification indices 
are ($4, 6n, 4$).

\

We see that:

(1) Since Bran($f_2 \circ f_3 \circ f_4 \circ f_5$)=($0,1,\infty$) with ramification indices: $2, 4, 4$(over $1, 0, \infty$ 
respectively), combined with (viii) we get: $f_9$ is unramified and each point of 
$(f_2 \circ f_3 \circ f_4 \circ f_5)^{-1}(1)$ has ramification index $4n$ under $f_{10}$. Note that: 
$E[3]$ is contained in $(f_2 \circ f_3 \circ f_4 \circ f_5)^{-1}(1)$;

(2) By (v) and (1), Bran($f_6\circ f_{10}$)=($a,1,\zeta _3, \zeta _3^2, \infty$) with local ramification 
indices being $4n$(over $1,\zeta _3, \zeta _3^2, \infty$) and $8n$(over $a$);

(3) By (vi) and (2), $f_{12}$ is unramified and $\mathsf{C}_3[3] \subseteq f_7^{-1}(1,\zeta _3, \zeta _3^2, \infty)$ which has ramification indices $2n$ under $f_{11}$;

(4) By (vii) and (3), ($0,1,\infty$) $\subset$ Bran($f_8 \circ f_{11}$) and they have local ramification 
indices $6n$;

(5) By (4) and (xii), we know that $C$ is an unramified cover of $C_2$ and hence we have:
\begin{equation*}
\mathsf{C}_{8n} \Rightarrow \mathsf{C}_{12n}
\end{equation*}
 for $n \geq 1$ and $n$ is even 
which is the same as 
\begin{equation*}
\mathsf{C}_{16n} \Rightarrow \mathsf{C}_{24n}
\end{equation*}
 for $n \geq 1$. 
\end{proof}

\begin{cor}
If $n \geq 6$ and the only prime divisors of $n$ are
2 and 3, then $\mathsf{C}_6 \Rightarrow \mathsf{C}_n$.
\end{cor}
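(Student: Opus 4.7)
The plan is to combine Propositions 5 and 6 to build up all of the needed curves $\mathsf{C}_{2^a 3^b}$, then descend to the actual target by a trivial divisibility morphism. Specifically, whenever $n \mid m$, the map $(x,y)\mapsto (x^{m/n},y)$ is a well-defined surjective morphism $\mathsf{C}_m\to \mathsf{C}_n$ (check: $y^2=x^m-1=(x^{m/n})^n-1$), so $\mathsf{C}_m\Rightarrow\mathsf{C}_n$ holds tautologically, using $\mathsf{C}_m$ as its own \'etale cover.

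Since $\mathsf{C}_6$ is hyperbolic hyperelliptic, Proposition 5 yields $\mathsf{C}_6\Rightarrow\mathsf{C}_8$. Iterating the first relation of Proposition 6 with successive values of the parameter produces the chain $\mathsf{C}_8\Rightarrow \mathsf{C}_{16}\Rightarrow \mathsf{C}_{32}\Rightarrow\cdots$, reaching $\mathsf{C}_{2^k}$ for every $k\geq 3$. Then iterating the second relation of Proposition 6 trades one power of $2$ for one power of $3$ at each step: whenever $k-j\geq 4$, the identity $\mathsf{C}_{2^{k-j}3^j}=\mathsf{C}_{16\cdot 2^{k-j-4}3^j}$ lets us apply Proposition 6 to reach $\mathsf{C}_{2^{k-j-1}3^{j+1}}$. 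Iterating $b$ times starting from $\mathsf{C}_{2^k}$ therefore produces $\mathsf{C}_{2^{k-b}3^b}$, valid so long as $k\geq b+3$ throughout.

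For the target $n=2^a3^b\geq 6$, choose $k:=\max(a,3)+b$; this gives both $k\geq b+3$ (so every trade is legal) and $k-b=\max(a,3)\geq a$ (so $n\mid 2^{k-b}3^b$). Concatenating the resulting chain $\mathsf{C}_6\Rightarrow \mathsf{C}_{2^{k-b}3^b}$ with the divisibility morphism $\mathsf{C}_{2^{k-b}3^b}\Rightarrow\mathsf{C}_n$, and invoking transitivity of $\Rightarrow$ (which follows because \'etale covers pull back to \'etale covers along arbitrary surjective morphisms of curves) completes the argument; the case $n=6$ is immediate.

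There is no conceptual obstacle here beyond bookkeeping: one must ensure the $2$-part of the exponent never drops below $4$ during the trading phase before the last trade, which is exactly what the choice $k=\max(a,3)+b$ enforces, and which also handles uniformly the edge cases $a\in\{0,1,2\}$ (e.g.\ $n=9$, where one trades twice starting from $\mathsf{C}_{32}$ to reach $\mathsf{C}_{72}$ and then descends to $\mathsf{C}_9$).
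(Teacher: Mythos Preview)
Your proof is correct and follows essentially the same route as the paper: use the hyperbolic-hyperelliptic result to get $\mathsf{C}_6\Rightarrow\mathsf{C}_8$, iterate $\mathsf{C}_{8n}\Rightarrow\mathsf{C}_{16n}$ to reach a large power of $2$, then trade $2$'s for $3$'s via $\mathsf{C}_{16n}\Rightarrow\mathsf{C}_{24n}$, and finally descend by the divisibility map $\mathsf{C}_m\to\mathsf{C}_n$; the paper simply takes $k=s+t+4$ rather than your tighter $k=\max(a,3)+b$, and leaves the final descent and transitivity implicit. One labeling issue: your references are off by one relative to the paper's numbering --- $H\Rightarrow\mathsf{C}_8$ is Proposition~6, and the two relations $\mathsf{C}_{8n}\Rightarrow\mathsf{C}_{16n}$, $\mathsf{C}_{16n}\Rightarrow\mathsf{C}_{24n}$ constitute Proposition~7.
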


\begin{proof}
Write $n=2^s3^t$, we have: (repeat applying Proposition 7) 
\begin{equation*}
\mathsf{C}_{6} \Rightarrow \mathsf{C_{8}} \Rightarrow 
\mathsf{C}_{16} \Rightarrow \mathsf{C}_{16\cdot 2} \Rightarrow
\mathsf{C}_{16\cdot 2^2} \Rightarrow ...... \Rightarrow \mathsf{C}_{16\cdot 2^{s+t}} \Rightarrow
\mathsf{C}_{16\cdot 2^{s+t-1}\cdot 3} \Rightarrow
\end{equation*}

\begin{equation*}
\mathsf{C}_{16\cdot 2^{s+t-2}\cdot 3^2} \Rightarrow ...... \Rightarrow
\mathsf{C}_{16\cdot 2^s\cdot 3^t} \Rightarrow \mathsf{C}_{2^s\cdot 3^t}=\mathsf{C}_n.
\end{equation*}

\end{proof}

\begin{prop}
$\mathsf{C}_6 \Rightarrow \mathsf{C}_5$.
\end{prop}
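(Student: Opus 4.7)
The target map on the $\mathsf{C}_5$ side is the cyclic degree-$5$ projection $(x,y)\mapsto y$, composed with an automorphism of $\mathbb{P}^1$ sending $\{i,-i,\infty\}$ to $\{0,1,\infty\}$. This is a $\mathbb{Z}/5$-cover with branch locus $\{0,1,\infty\}$ and local ramification index exactly $5$ at every preimage. By Abhyankar's lemma, the proposition reduces to constructing an étale cover $\tilde{\mathsf{C}}_6 \to \mathsf{C}_6$ together with a map $\varphi:\tilde{\mathsf{C}}_6 \to \mathbb{P}^1$ whose local ramification index is divisible by $5$ at every preimage of $\{0,1,\infty\}$: the fibered compositum $\tilde{\mathsf{C}}_6 \times_{\mathbb{P}^1}\mathsf{C}_5$ is then étale over $\tilde{\mathsf{C}}_6$, hence over $\mathsf{C}_6$, and surjects onto $\mathsf{C}_5$.

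A short Riemann-Hurwitz count shows that any such $\varphi$ with source of genus $g$ and degree $d$ satisfies $d \leq 5(g-1)$, so one cannot take $\tilde{\mathsf{C}}_6=\mathsf{C}_6$ with $\varphi$ factoring through $\mathbb{P}^1$ or through an elliptic curve; the étale cover must itself contribute genus. The natural source is a degree-$5$ isogeny of elliptic curves. Since $\mathsf{C}_6$ is a double cover $\pi:\mathsf{C}_6 \to E_1 : y^2 = u^3 - 1$ (via $u = x^2$, branched at the two $3$-torsion points $(0,\pm i)$) and the modular curve $X_0(5)$ is rational, there exists a degree-$5$ étale isogeny $\phi:E_1' \to E_1$ from a $5$-isogenous elliptic curve $E_1'$; pulling $\pi$ back along $\phi$ gives $\tilde{\mathsf{C}}_6 := \mathsf{C}_6 \times_{E_1} E_1'$, an étale degree-$5$ cover of $\mathsf{C}_6$ of genus $6$.

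For $\varphi$, I would compose $\tilde{\mathsf{C}}_6 \to E_1'\to E_1 \to \mathbb{P}^1_y$ (using the $y$-coordinate on $E_1$) with a tailored rational post-composition $\psi:\mathbb{P}^1_y\to\mathbb{P}^1$: the role of $\psi$ is to contract the branch locus $\{i,-i,\infty\}$ of $E_1 \to \mathbb{P}^1_y$ into $\{0,1,\infty\}$ and, in combination with the $5$-fold splitting produced by $\phi$ above these points, to force every preimage of $\{0,1,\infty\}$ in $\tilde{\mathsf{C}}_6$ to have index divisible by $5$. A post-composition such as $y\mapsto y^5$, after an appropriate Möbius normalization, supplies ramification of index $5$ above two of the target points, and the remaining target point is handled via the intrinsic index-$3$ ramification of $E_1 \to \mathbb{P}^1_y$ combined with the $5$-fold multiplicity from $\phi$.

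The main obstacle is the precise ramification bookkeeping: the divisibility-by-$5$ condition must hold at \emph{every} preimage of $\{0,1,\infty\}$, not only at those coming from the natural branch points of the intermediate maps, and $\psi$ must be chosen so that its critical-point multiplicities supply the missing factor of $5$ wherever the intermediate structure does not. The Riemann-Hurwitz bound $d\leq 25$ on $\tilde{\mathsf{C}}_6$ leaves enough room for this, but explicitly aligning the $3$-torsion data on $E_1$, the $5$-torsion data on $E_1'$, and the critical values of $\psi$ is the delicate step; once this alignment is achieved, Abhyankar's lemma yields the conclusion $\mathsf{C}_6 \Rightarrow \mathsf{C}_5$ immediately.
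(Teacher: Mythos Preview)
Your reduction via Abhyankar's lemma is correct, but the construction of $\varphi$ cannot succeed, and is in fact excluded by the very Riemann--Hurwitz bound you quote.

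The central misstep is the phrase ``the $5$-fold multiplicity from $\phi$''. The isogeny $\phi:E_1'\to E_1$ is \'etale, so pulling back along it multiplies the \emph{number} of preimages by $5$ but leaves every local ramification index unchanged. Concretely, your composite $\tilde{\mathsf{C}}_6\to E_1'\xrightarrow{\phi} E_1\to\mathbb{P}^1_y$ equals $\tilde{\mathsf{C}}_6\to\mathsf{C}_6\to\mathbb{P}^1_y$ (the fibre-product square commutes), and since $\tilde{\mathsf{C}}_6\to\mathsf{C}_6$ is \'etale the local indices over $\{i,-i,\infty\}$ are exactly those of $\mathsf{C}_6\to\mathbb{P}^1_y$, namely $6,6,(3,3)$ --- none divisible by $5$. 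A post-composition of the shape $y\mapsto y^5$ forces divisibility by $5$ over two target points, but over the third its fibre consists of five \emph{unramified} points, above which the indices in $\tilde{\mathsf{C}}_6$ remain $6$, $3$, or $1$. The isogeny contributes nothing here.

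This is not just a bookkeeping gap. Your map $\tilde{\mathsf{C}}_6\to E_1'\to E_1\to\mathbb{P}^1_y$ already has degree $2\cdot 5\cdot 3=30$, and any nonconstant $\psi$ only increases this; yet you correctly computed $d\le 5(g-1)=25$ for a genus-$6$ source. So no $\varphi$ factoring through this tower can meet the divisibility condition, regardless of how $\psi$ is chosen.

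The paper's proof runs in the opposite direction. Rather than seeking $5$-divisible ramification on a cover of $\mathsf{C}_6$, it builds an explicit Belyi map $\mathsf{C}_5\to\mathbb{P}^1$ all of whose local indices are $\{2,3\}$-smooth: a chain of nine hand-crafted maps $f_1,\dots,f_9$ that contracts $\{\zeta_5^k,\infty\}$ to $\{0,1,\infty\}$ step by step, ending with a single degree-$35$ rational function. Corollary~8 then supplies $\mathsf{C}_6\Rightarrow\mathsf{C}_N$ for a suitable $\{2,3\}$-number $N$, and Abhyankar gives $\mathsf{C}_N\Rightarrow\mathsf{C}_5$. The asymmetry is the point: via Corollary~8 one has access to arbitrarily large $\{2,3\}$-smooth ramification on the $\mathsf{C}_6$ side, so the problem becomes a concrete contraction of five points on $\mathbb{P}^1$ with $\{2,3\}$-indices --- free of the genus constraint that blocks your approach.
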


\begin{proof}
By Abhyankar's Lemma and Corollary 8, we only need to exhibit a map from $\mathsf{C}_5$ to $\mathbb{P}^1$ such that the branch points are exactly (0,1,$\infty$) and all local ramification indices have only prime divisors 2 or 3.

\

\noindent Consider the following maps:
\[
\xymatrix{ \mathsf{C}_5 \ar[r]^{f_1}&  \mathbb{P}^1 \ar[r]^{f_2}& \mathbb{P}^1 \ar[r]^{f_3}& \mathbb{P}^1\ar[r]^{f_4}
& \mathbb{P}^1 \ar[r]^{f_5}& \mathbb{P}^1 \ar[r]^{f_6}&  \mathbb{P}^1 \ar[r]^{f_7}&\mathbb{P}^1 \ar[r]^{f_8}
&\mathbb{P}^1 \ar[r]^{f_9}&\mathbb{P}^1 \\}
\]

\

Here: ($\zeta_5$ is denoted by $t$)

(i) The map $f_1$ is the degree 2 projection.

Bran($f_1$)=($1,t,t^2,t^3,t^4,\infty$) and all ramification indices are 2;

(ii) The map $f_2$ is $z+\frac{1}{z}$.

Ram($f_2$)=($1,-1$) with all ramification indices 2 and

Bran($f_2$)$\cup f_1$(Bran($f_1$))=($2,-2,t+t^4,t^2+t^3,\infty$). This set is denoted by $B_2$;

(iii) The map $f_3$ is $-\frac{1}{z}$.

$f_3$ is clearly unramified and $f_3(B_2)=(-\frac{1}{2},\frac{1}{2}, t^2+t^3,t+t^4,0)$. 
This set is denoted by $B_3$;

(Note that $(t+t^4)(t^2+t^3)=t^3+t^4+t+t^2=-1$.)

(iv) The map $f_4$ is $z^2+z-1$.

Ram($f_4$)=(-$\frac{1}{2},\infty$) with all ramification indices 2 and

Bran($f_4$) $\cup f_4(B_3)=(-\frac{5}{4}, \infty, -\frac{1}{4},0,-1)$. 
This set is denoted by $B_4$;

(v) The map $f_5$ is $-4z$.

Clearly it is unramified and $f_5(B_4)=(0,1,4,5,\infty)$.
This set is denoted by $B_5$;

(vi) The map $f_6$ is $4(z-\frac{5}{2})^2$.

Ram($f_6$)=($\frac{5}{2}, \infty$) with all ramification indices 2 and

Bran($f_6$) $\cup f_6(B_5)=(0,\infty,25,9)$. 
This set is denoted by $B_6$;

(vii) The map $f_7$ is $\frac{1}{2}(\frac{1}{2}(z+\frac{225}{z})+15)$.

Ram($f_7$)=($15,-15$) with all ramification indices 2 and

Bran($f_7$) $\cup f_7(B_6)=(0,15,16,\infty)$. 
This set is denoted by $B_7$;

(viii) The map $f_8$ is $\frac{z}{z-15}$.

Clearly it is unramified and $f_8(B_7)=(0,1,16,\infty)$. 
This set is denoted by $B_8$;

(ix) The map $f_9$ is $\frac{(z-1)^{32}\cdot (z-16)^3}{(z-10)^8\cdot z^{27}}$.

Ram($f_9$)=($0,1,10,16,\infty$) with corresponding ramification indices $3^3, 2^5, 2^3, 3, 3$ and

(Note that $\frac{df_9}{f_9}=\frac{4320}{z(z-1)(z-10)(z-16)}$ and the computation 
for ramification index of $\infty$ follows from the Riemann-Hurwitz Formula.)

Bran($f_9$) $\cup f_9(B_8)=(0,1,\infty)$.

\

By the computations in (i)-(ix), we see that Bran($f_9 \circ f_8 \circ f_7 \circ f_6 \circ f_5 \circ f_4 \circ f_3 \circ f_2 \circ f_1)=$ Bran($f_9$) $\cup f_9(B_8)=(0,1,\infty)$ with all local ramification indices only having prime divisors 2 or 3 (Note that in each step, the local ramification indices only have prime divisors 2 or 3).  

\end{proof}

\begin{prop}
$\mathsf{C}_{2^{11}\cdot 3^3 \cdot n} \Rightarrow \mathsf{C}_{5n}$
for $n \geq 1$.
\end{prop}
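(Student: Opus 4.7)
The plan is to apply Abhyankar's Lemma to two maps to $\mathbb{P}^1$: a map $\phi_n \colon \mathsf{C}_{5n} \to \mathbb{P}^1$ coming from Proposition 9, and a map $\beta \colon \mathsf{C}_{2^{11} \cdot 3^3 \cdot n} \to \mathbb{P}^1$ built in the style of Proposition 7, both with common branch locus $\{0, 1, \infty\}$ and arranged so that every local ramification index of $\phi_n$ above a point $z \in \{0, 1, \infty\}$ divides every local ramification index of $\beta$ above $z$. Once this compatibility is in place, the compositum of $\mathsf{C}_{2^{11} \cdot 3^3 \cdot n}$ and $\mathsf{C}_{5n}$ over $\mathbb{P}^1$ is an unramified cover of $\mathsf{C}_{2^{11} \cdot 3^3 \cdot n}$ surjecting onto $\mathsf{C}_{5n}$, proving $\mathsf{C}_{2^{11} \cdot 3^3 \cdot n} \Rightarrow \mathsf{C}_{5n}$.

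For the map $\phi_n$, I would revisit the explicit composition $\phi = f_9 \circ f_8 \circ \cdots \circ f_1 \colon \mathsf{C}_5 \to \mathbb{P}^1$ constructed in the proof of Proposition 9, and sharpen its conclusion from ``all local ramification indices have only prime divisors $2$ or $3$'' to the quantitative statement that every local ramification index of $\phi$ divides $2^{11} \cdot 3^3$. This is a step-by-step count of $2$-adic and $3$-adic valuations along the chain, using that $f_1, \ldots, f_8$ have degrees only $1$ or $2$, and that the degree-$35$ map $f_9$ has ramification indices $2^5, 2^3, 3^3, 3, 3$ at its five ramification points. Then, for $n \geq 1$, I would compose $\phi$ with the natural degree-$n$ cover $\pi \colon \mathsf{C}_{5n} \to \mathsf{C}_5$, $(x, y) \mapsto (x^n, y)$, which is ramified only above the points $(0, \pm i)$ and the point at infinity of $\mathsf{C}_5$, each with ramification index $n$. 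Setting $\phi_n = \phi \circ \pi$, its branch locus remains contained in $\{0, 1, \infty\}$ and each of its local ramification indices is a product of a $\phi$-ramification index (dividing $2^{11} \cdot 3^3$) with either $1$ or $n$, hence divides $2^{11} \cdot 3^3 \cdot n$.

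For the map $\beta$, I would build a morphism $\mathsf{C}_{2^{11} \cdot 3^3 \cdot n} \to \mathbb{P}^1$ with branch locus $\{0, 1, \infty\}$ and ramification indices above each branch point that are multiples of the corresponding $\phi_n$-ramification indices. The starting point is the standard map, namely the hyperelliptic projection followed by $z^{2^{11} \cdot 3^3 \cdot n}$ and $((w-1)/(w+1))^2$, with ramification $(4, 2^{11} \cdot 3^3 \cdot n, 4)$ above $(0, 1, \infty)$. To raise the ramification above $0$ and above $\infty$ up to multiples of the corresponding $\phi_n$-ramifications, I would iteratively form compositums with carefully chosen elliptic curves equipped with multiplication-by-$2$ and multiplication-by-$3$ maps, in analogy with the elliptic curve $E \colon y^2 = x^3 - x$ used in Proposition 7. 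Applying Abhyankar's Lemma to the pair $(\phi_n, \beta)$ then concludes the proof. The main obstacle is this construction of $\beta$; the specific exponents $2^{11}$ and $3^3$ appearing in the statement of the proposition are expected to be exactly what is needed to guarantee enough $2$-adic and $3$-adic room for the elliptic-curve iteration to equalize the ramification pattern of $\beta$ against that of $\phi_n$.
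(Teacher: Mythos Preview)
Your overall framework—feed the Belyi map $\phi$ from Proposition~9 into Abhyankar's lemma against a map out of $\mathsf{C}_{2^{11}\cdot 3^3\cdot n}$—is exactly the paper's. The genuine gap is the construction of $\beta$, which you yourself flag as ``the main obstacle'' and then do not carry out. The elliptic-curve compositums of Proposition~7 redistribute ramification rather than manufacture it, and arranging for the factor $n$ to migrate from the fibre over $1$ (where your starting map has index $2^{11}\cdot 3^3\cdot n$) to the fibres over $0$ and $\infty$ (where you have only index $4$ and—since $2^{11}\cdot 3^3\cdot n$ is even so the hyperelliptic projection is unramified at infinity—index $2$, not $4$) is real work that you have not done and that would in any case require a construction depending on $n$.

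The paper avoids this obstacle entirely by replacing your $x$-projection-based map with the \emph{$y$-projection} $f_1\colon\mathsf{C}_{2^{11}\cdot 3^3\cdot n}\to\mathbb{P}^1$. This map has degree $N=2^{11}\cdot 3^3\cdot n$ and is branched over exactly three points with indices $N$, $N$, $N/2$; after an automorphism sending these to $\{0,1,\infty\}$, every fibre already carries ramification divisible by $2^{10}\cdot 3^3\cdot n$. Since every local index of $\phi$ divides $2^{10}\cdot 3^3$, Abhyankar applies immediately—no elliptic curves at all. Concretely the paper packages this as a two-step compositum: first form $C_1$ as a compositum of $\mathsf{C}_{2^{11}\cdot 3^3\cdot n}$ and $\mathsf{C}_5$ over $\mathbb{P}^1$ via $f_1$ and $\phi$, obtaining an unramified cover of $\mathsf{C}_{2^{11}\cdot 3^3\cdot n}$ whose induced map $f_6\colon C_1\to\mathsf{C}_5$ has all ramification indices divisible by $n$; then form $C_2$ as a compositum of $C_1$ and $\mathsf{C}_{5n}$ over a second $\mathbb{P}^1$, using the $y$-projections of $\mathsf{C}_5$ and $\mathsf{C}_{5n}$ (totally ramified of indices $5$ and $5n$ over three points). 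That second step works because the three ramification points $(0,\pm i),\infty$ of the $y$-projection of $\mathsf{C}_5$ lie in $\phi^{-1}(\{0,1,\infty\})$—the same unstated fact you would need in order to justify your claim that $\text{Bran}(\phi_n)\subseteq\{0,1,\infty\}$.
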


\begin{proof}
Let us still use this diagram:
\[
\xymatrix{ \mathsf{C}_5 \ar[r]^{f_1}&  \mathbb{P}^1 \ar[r]^{f_2}& \mathbb{P}^1 \ar[r]^{f_3}& \mathbb{P}^1\ar[r]^{f_4}
& \mathbb{P}^1 \ar[r]^{f_5}& \mathbb{P}^1 \ar[r]^{f_6}&  \mathbb{P}^1 \ar[r]^{f_7}&\mathbb{P}^1 \ar[r]^{f_8}
&\mathbb{P}^1 \ar[r]^{f_9}&\mathbb{P}^1 \\}
\]

\noindent Here $f_i$ are the maps as in the last proposition
and let $f=f_9 \circ f_8 \circ f_7 \circ f_6 \circ f_5 \circ f_4 \circ f_3 \circ f_2 \circ f_1$. Note that $f$ is a Belyi map from $\mathsf{C}_5$ to $\mathbb{P}^1$ with all local ramification indices divides $2^{10}\cdot 3^3$.

\medskip{}

Now let us consider the following diagram:
\[
\xymatrix{& & C_2\ar[dl]^{f_8} \ar[ddrr]^{f_7} & & \\
 & C_1\ar[dl]^{f_5} \ar[dr]^{f_6} \\
 \mathsf{C}_{2^{11}\cdot 3^3\cdot n}  \ar[dr]^{f_1} & & \mathsf{C}_5\ar[ld]^{f_2} \ar[dr]^{f_3} & &\mathsf{C}_{5n} \ar[dl]^{f_4}\\
  & \mathbb{P}^1 & & \mathbb{P}^1\\}
\]

\

\noindent In this diagram:

(i) The map $f_1$ is the projection $y$ composed with an automorphism of $\mathbb{P}^1$ which maps three 
branch points to 0,1 and $\infty$ such that $f_1^{-1}(0)$ and $f_1^{-1}(1)$ each contains one point
with ramification index $2^{11}\cdot 3^3\cdot n$ and $f_1^{-1}(\infty)$ contains two points with ramification
indice $2^{10}\cdot 3^3\cdot n$.

(ii) The map $f_2$ is the map $f$ above.

(iii) The curve $C_1$ is a compositum of  $\mathsf{C}_{2^{11}\cdot 3^3 \cdot n}$ and  $\mathsf{C}_{5}$
over $\mathbb{P}^1$.
 
(iv) By (i) and (ii), $f_5$ is unramified and each point in $f_2^{-1}(0,1,\infty)$ has ramification index a multiple of $n$ under $f_6$.

(v) The map $f_3$ is the projection $y$ composed with an automorphism of $\mathbb{P}^1$ which maps three 
branch points to 0,1 and $\infty$ with ramification indices 5. 

(vi) The map $f_4$ is the projection $y$ composed with an automorphism of $\mathbb{P}^1$ which maps three 
branch points to 0,1 and $\infty$ with ramification indices 5n.  

(vii) The curve $C_2$ is a compositum of  $C_1$ and  $\mathsf{C}_{5n}$
over $\mathbb{P}^1$ (via map $f_3\circ f_6$ and $f_4)$.

(viii) From (iv), (v) and (vi) and Abhyankar's lemma, we see that
$f_8$ is unramified. 

(ix) By (iv) and (viii), $C_2$ is an unramified cover of $\mathsf{C}_{2^{11}\cdot 3^3 \cdot n}$ which
maps surjectively 
onto $\mathsf{C}_{5n}$.

\end{proof}

\begin{cor}
If $n \geq 5$ and the only prime divisors of n are 2, 3 or 5, 
then: $\mathsf{C}_6 \Rightarrow \mathsf{C}_n$. 
\end{cor}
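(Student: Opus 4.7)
The plan is to induct on the $5$-adic valuation $c$ of $n$. Write $n = 2^{a} 3^{b} 5^{c}$ with $a, b, c \geq 0$ and $n \geq 5$.

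In the base case $c = 0$, the hypothesis $n \geq 5$ together with $n$ being supported only on the primes $2$ and $3$ forces $n \geq 6$, so Corollary 8 immediately gives $\mathsf{C}_{6} \Rightarrow \mathsf{C}_{n}$.

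For the inductive step with $c \geq 1$, I would set $m = n/5 = 2^{a} 3^{b} 5^{c-1}$ and invoke Proposition 10 to obtain
\[
\mathsf{C}_{2^{11}\cdot 3^{3}\cdot m} \;\Rightarrow\; \mathsf{C}_{5m} \;=\; \mathsf{C}_{n}.
\]
The intermediate index $N := 2^{11+a}\cdot 3^{3+b}\cdot 5^{c-1}$ has $5$-adic valuation $c-1 < c$, is supported only on $\{2,3,5\}$, and is comfortably at least $6$. If $c-1 \geq 1$ the inductive hypothesis supplies $\mathsf{C}_{6} \Rightarrow \mathsf{C}_{N}$; if $c-1 = 0$ then $N$ is supported only on $\{2,3\}$ with $N \geq 2^{11}\cdot 3^{3}$, and Corollary 8 yields the same conclusion. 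Chaining this with the cover produced by Proposition 10 gives $\mathsf{C}_{6} \Rightarrow \mathsf{C}_{n}$, completing the induction.

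The only non-arithmetical point, and the one step in the argument that I expect to require any thought at all, is the transitivity of the relation $\Rightarrow$ used in the chaining $\mathsf{C}_{6} \Rightarrow \mathsf{C}_{N} \Rightarrow \mathsf{C}_{n}$: given étale covers $\tilde{C}_{6} \to \mathsf{C}_{6}$ mapping onto $\mathsf{C}_{N}$ and $\tilde{C}_{N} \to \mathsf{C}_{N}$ mapping onto $\mathsf{C}_{n}$, one forms the fibre product $\tilde{C}_{6} \times_{\mathsf{C}_{N}} \tilde{C}_{N}$ and picks a component dominating both factors; since the pullback of an étale morphism is étale, this component is an étale cover of $\tilde{C}_{6}$, hence of $\mathsf{C}_{6}$, and it maps surjectively onto $\tilde{C}_{N}$ and thence onto $\mathsf{C}_{n}$. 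This is routine, so the entire proof reduces to the two-line bookkeeping with the prime factorisation $n = 2^{a}3^{b}5^{c}$ described above.
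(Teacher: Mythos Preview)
Your proof is correct and follows essentially the same approach as the paper: both reduce to Corollary~8 when the $5$-adic valuation vanishes and otherwise peel off one factor of $5$ at a time via Proposition~10, the only difference being that the paper writes out the resulting chain $\mathsf{C}_6 \Rightarrow \mathsf{C}_{2^{11t}\cdot 3^{3t}\cdot m} \Rightarrow \cdots \Rightarrow \mathsf{C}_{5^t m}$ explicitly while you package it as an induction on $c$. Your remark on transitivity of $\Rightarrow$ is a welcome bit of hygiene that the paper leaves implicit.
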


\begin{proof}
Write n as $2^r3^s5^t$ and $m$ as $2^r3^s$, we have: 

If $t=0$, this follows from Corollary 8.

If $t \neq 0$, then: (repeat using Proposition 10)
\begin{equation*}
\mathsf{C}_6 \Rightarrow \mathsf{C}_{2^{11t}\cdot 3^{3t}\cdot m} \Rightarrow
\mathsf{C}_{2^{11(t-1)}\cdot 3^{3(t-1)}\cdot 5m} \Rightarrow
\mathsf{C}_{2^{11(t-2)}\cdot 3^{3(t-2)}\cdot 5^2m} \Rightarrow ... \Rightarrow
\mathsf{C}_{2^{11(t-t)}\cdot 3^{3(t-t)}\cdot 5^tm}=\mathsf{C}_n.
\end{equation*}
\end{proof}

\noindent {\bf{Proof of Theorem 2:}} 
Assume the only prime divisors of $n$ and $m$ are 2,3 or 5. By Proposition 5, $\mathsf{C}_n$ lies over $\mathsf{C}_6$. By Corollary 11, $\mathsf{C}_6$ also lies over $\mathsf{C}_m$ and consequently $\mathsf{C}_n$ lies over $\mathsf{C}_m$. Similarly $\mathsf{C}_m$
also lies over $\mathsf{C}_n$. Hence $\mathsf{C}_n$ and $\mathsf{C}_m$ are equilvalent.
For the second part, just note that for $k \geq 5$, $\mathsf{C}_k$ is a hyperbolic hyperelliptic curve.
\hfill\(\blacksquare\)

\begin{prop}
$\mathsf{C}_{6\cdot 13} \Rightarrow \mathsf{C}_7$.
\end{prop}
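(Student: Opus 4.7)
The plan is to follow the template set by Propositions 9 and 10. I would first construct a Belyi map $f : \mathsf{C}_7 \to \mathbb{P}^1$ with $\mathrm{Bran}(f) = \{0, 1, \infty\}$ and every local ramification index a product of primes in $\{2, 3, 13\}$, and then combine $f$ with a suitably ramified map $\mathsf{C}_{6 \cdot 13} \to \mathbb{P}^1$ via Abhyankar's lemma to build the desired unramified cover.

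To construct $f$, I would start with the degree-$2$ projection $\mathsf{C}_7 \to \mathbb{P}^1$, whose branch locus $\{1, \zeta_7, \ldots, \zeta_7^6, \infty\}$ consists of eight points of ramification index $2$. Composing with $z + 1/z$ reduces the branch locus to $\{2, -2, w_1, w_2, w_3, \infty\}$, where the $w_j = 2\cos(2\pi j/7)$ are the three real roots of the cubic $w^3 + w^2 - 2w - 1$. Composing next with that cubic $g(w) = w^3 + w^2 - 2w - 1$ collapses the three $w_j$ to $0$, sends $2 \mapsto 7$ and $-2 \mapsto -1$, fixes $\infty$, and produces two Galois-conjugate critical values $-7(1 \pm 2\sqrt{7})/27$ of ramification index $2$.

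From here the main work begins: continue with a chain of rational maps on $\mathbb{P}^1$ that first rationalizes the pair $-7(1 \pm 2\sqrt{7})/27$ via a quadratic substitution centered at their midpoint $-7/27$ (introducing ramification index $2$), and then successively collapses the remaining finite branch points down to $\{0, 1, \infty\}$ using Möbius transformations, power maps, and Chebyshev-type polynomials, modeled directly on steps (v)--(ix) of Proposition 9. The whole arithmetic content of the proposition is that the numbers arising along the way factor over $\{2, 3, 7, 13\}$ in a compatible enough way that the residual $7$'s can be cleared by a single ramification of order divisible by $13$ at one of the last stages; this is where the prime $13$ enters, playing the role that the exponents $32, 3, 8, 27$ played in the map $f_9$ of Proposition 9.

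Once $f$ is in hand, I would form a projection $\mathsf{C}_{6 \cdot 13} \to \mathbb{P}^1$ in the spirit of step (xi) of Proposition 10: take the $y$-projection followed by a Möbius transformation that sends three of its branch points to $\{0, 1, \infty\}$, then post-compose with a power map and $\bigl((x-1)/(x+1)\bigr)^2$ so that the resulting branch locus is exactly $\{0, 1, \infty\}$ with ramification indices divisible by those appearing in $f$. Since $6 \cdot 13 = 2 \cdot 3 \cdot 13$ supplies precisely the primes used by $f$, Abhyankar's lemma applied to the compositum of $\mathsf{C}_{6 \cdot 13}$ and $\mathsf{C}_7$ over $\mathbb{P}^1$ yields an unramified cover of $\mathsf{C}_{6 \cdot 13}$ that surjects onto $\mathsf{C}_7$, giving $\mathsf{C}_{6\cdot 13} \Rightarrow \mathsf{C}_7$. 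The hardest step will be the explicit chain in the third paragraph: a bespoke numerical computation in the spirit of Proposition 9, where identifying the quadratic, power, and Chebyshev-type maps that force the prime $13$ --- and no larger prime --- to appear is the crux.
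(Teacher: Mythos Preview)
Your overall shape --- build a Belyi map $h:\mathsf{C}_7\to\mathbb{P}^1$ with controlled ramification primes, then invoke Abhyankar --- matches the paper. Two concrete points, however, diverge from what actually happens, and the second is a genuine gap.

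First, the prime set. You assert the Belyi map can be arranged with every local index a $\{2,3,13\}$-number. The paper does \emph{not} achieve this: its explicit chain $h_1,\dots,h_6$ (degree-2 projection, then $z+1/z$, then the M\"obius $(z+2)/(z-2)$, then the cubic $7z^3+35z^2+21z+1$, then another M\"obius, then a large Belyi function found by search) produces indices dividing $2^{15}3^{10}5^{4}\cdot 13$. The prime $5$ is present and is not eliminated. Incidentally, the paper's M\"obius $h_3=(z+2)/(z-2)$ before the cubic is exactly what sidesteps your $\sqrt{7}$ problem: after $h_3$ the relevant cubic has rational critical points $-1/3,-3$ and rational critical values $-64/27,\,64$, so no quadratic ``rationalization'' step is needed.

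Second, and more seriously, your final compositum step does not work as written. The maps you can manufacture from $\mathsf{C}_{6\cdot 13}$ to $\mathbb{P}^1$ by taking the $y$-projection and post-composing with power maps and $((x-1)/(x+1))^2$ have local ramification indices bounded by $6\cdot 13=78$ (indeed $78,78,39$ at the three branch points in the basic case). Abhyankar's lemma requires those indices to be \emph{divisible} by the indices of $h$, and $78$ is not divisible by $2^{15}$, let alone by $5^4$. ``Supplying precisely the primes'' is not the criterion; the exact powers must divide. The paper handles this by inserting an intermediate curve: it first uses Propositions~7 and~10 to obtain
\[
\mathsf{C}_{6\cdot 13}\ \Rightarrow\ \mathsf{C}_{2^{15}3^{10}5^{4}\cdot 13},
\]
and only then forms the compositum with $\mathsf{C}_7$ over $\mathbb{P}^1$, where now the source curve's ramification indices genuinely absorb all indices of $h$. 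Without this boosting step (or an equivalent mechanism for raising the $2$-, $3$-, and $5$-powers while retaining the factor $13$), the Abhyankar argument fails, and your proof sketch has no replacement for it.
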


\begin{proof}
Consider the following maps:
\[
\xymatrix{ \mathsf{C}_7\ar[r]^{h_1}&  \mathbb{P}^1 \ar[r]^{h_2}& \mathbb{P}^1 \ar[r]^{h_3}
& \mathbb{P}^1\ar[r]^{h_4}& \mathbb{P}^1 \ar[r]^{h_5}& \mathbb{P}^1 \ar[r]^{h_6}&  \mathbb{P}^1 \\}
\]

\

Here:

(i) The map $h_1$ is the degree 2 projection.

Bran($h_1$)=($1,t,t^2,t^3,t^4,t^5,t^6\infty$) and all local ramification indices are 2;

(ii) The map $h_2$ is $z+\frac{1}{z}$.

Ram($h_2$)=($1,-1$) with all ramification indices 2 and

Bran($h_2$)$\cup f_1$(Bran($h_1$))=($2,-2,t+t^6,t^2+t^5,t^3+t^4,\infty$).
This set is denoted by $D_2$;

(iii) The map $h_3$ is $\frac{z+2}{z-2}$.

$h_3$ is unramified and $h_3(D_2)=(\infty, 0, t_1, t_2, t_3, 1)$.
This set is denoted by $D_3$.

Here $t_i$ are roots of $7z^3+35z^2+21z+1=0$;

(iv) The map $h_4$ is $7z^3+35z^2+21z+1$.

Ram($h_4$)=(-$\frac{1}{3},-3,\infty$) with all ramification indices 2 or 3 and

Bran($h_4$) $\cup h_4(D_3)=(0,1,64,-\frac{64}{27},\infty)$. 
This set is denoted by $D_4$;

(v) The map $h_5$ is $256\cdot \frac{z-1}{z-64}$.

Clearly it is unramified and $h_5(D_4)=(0,4,13,256,\infty)$. 
This set is denoted by $D_5$;

(vi) The map $h_6$ is 
\begin{equation*}
\frac{z^{12301875}\cdot (z-6)^{32752512}\cdot (z-256)^{13}}
{(z-4)^{42120000}\cdot (z-13)^{2560000}\cdot (z+14)^{374400}}.
\end{equation*}

(This map is coming from a search using Belyi's formula (See Definition 21 and the proof of Proposition 23).)

Ram($h_6$)=($0,4,6,13,-14,256,\infty$) with corresponding ramification indices 

$3^95^4, 2^63^45^413, 2^73^913, 2^{12}5^4, 2^73^25^213,13,5$ and Bran($h_6$) $\cup h_6(D_5)=(0,1,\infty)$.

\

By (i)-(vi), $h_6 \circ h_5 \circ h_4 \circ h_3 \circ h_2 \circ h_1$ is a 
Belyi map with all local ramification indices dividing $2^{15}3^{10}5^413$ .
By Abhyankar's Lemma, Propostition 7 and Proposition 10, we have:

\begin{equation*}
\mathsf{C}_{6\cdot 13} \Rightarrow \mathsf{C}_{2^{15}3^{10}5^413} \Rightarrow \mathsf{C}_7
\end{equation*}

\end{proof}

\begin{prop}
$\mathsf{C}_{2^{16}\cdot 3^{10} \cdot 5^4 \cdot 13n} \Rightarrow \mathsf{C}_{7n}$
for $n \geq 1$.
\end{prop}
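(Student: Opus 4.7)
The plan is to mimic the proof of Proposition 10 almost verbatim, with the Belyi map from $\mathsf{C}_7$ constructed in Proposition 12 playing the role that the map $f$ from $\mathsf{C}_5$ did there. Let $h := h_6 \circ h_5 \circ h_4 \circ h_3 \circ h_2 \circ h_1 : \mathsf{C}_7 \to \mathbb{P}^1$ be the Belyi map of Proposition 12; its branch locus is $\{0,1,\infty\}$ and every local ramification index of $h$ divides $M := 2^{15}\cdot 3^{10}\cdot 5^4\cdot 13$. Setting $N := 2Mn = 2^{16}\cdot 3^{10}\cdot 5^4\cdot 13\cdot n$, the goal is to build an unramified cover of $\mathsf{C}_N$ that maps surjectively onto $\mathsf{C}_{7n}$.

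I would use the analogue of the diagram from Proposition 10:
\[
\xymatrix{& & C_2\ar[dl] \ar[ddrr] & & \\
 & C_1\ar[dl] \ar[dr] \\
 \mathsf{C}_{N}\ar[dr]^{f_1} & & \mathsf{C}_7\ar[ld]^{f_2} \ar[dr]^{f_3} & &\mathsf{C}_{7n} \ar[dl]^{f_4}\\
  & \mathbb{P}^1 & & \mathbb{P}^1\\}
\]
where $f_2 = h$; $f_1$ is the projection $y$ from $\mathsf{C}_N$ composed with an automorphism of $\mathbb{P}^1$ so that $f_1^{-1}(0)$ and $f_1^{-1}(1)$ are each a single point of ramification $N$ and $f_1^{-1}(\infty)$ consists of two points of ramification $N/2 = Mn$; $f_3$ is the projection $y$ from $\mathsf{C}_7$ sending its three branch points to $0, 1, \infty$ with all local ramification indices equal to $7$; and $f_4$ is the projection $y$ from $\mathsf{C}_{7n}$ sending its branch points to $0, 1, \infty$ with local ramification indices dividing $7n$. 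The curve $C_1$ is the compositum of $\mathsf{C}_N$ and $\mathsf{C}_7$ over $\mathbb{P}^1$ via $f_1$ and $f_2$, and $C_2$ is the compositum of $C_1$ and $\mathsf{C}_{7n}$ over $\mathbb{P}^1$ via $f_3$ composed with the projection $C_1 \to \mathsf{C}_7$, and $f_4$.

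The verification consists of two applications of Abhyankar's lemma. First, every local ramification index of $f_2 = h$ divides $M$, while over each of $0, 1, \infty$ the greatest common divisor of the local ramification indices of $f_1$ is at least $N/2 = Mn$; Abhyankar's lemma therefore yields that $C_1 \to \mathsf{C}_N$ is unramified and that at each preimage in $C_1$ of any point $p \in h^{-1}(\{0,1,\infty\})$ the local ramification index of the map $C_1 \to \mathsf{C}_7$ equals $N/e(p)$ or $(N/2)/e(p)$, hence is a multiple of $n$. Post-composing with $f_3$, whose ramification is $7$ over each branch point, the induced map $C_1 \to \mathbb{P}^1$ ramifies with index a multiple of $7n$ over each of $0,1,\infty$. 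Since all local ramification indices of $f_4$ divide $7n$, Abhyankar's lemma applies a second time to show that $C_2 \to C_1$ is unramified. Chaining, $C_2$ is an unramified cover of $\mathsf{C}_N$ that maps surjectively onto $\mathsf{C}_{7n}$, which is the claim. Since this is a direct adaptation of Proposition 10 with Proposition 12 as input, I do not anticipate a conceptual obstacle; the only bookkeeping point is that when $n$ is even the map $f_4$ has four rather than three branch points (with indices $7n, 7n, 7n/2, 7n/2$), but the divisibility condition feeding Abhyankar's lemma still holds and the argument is unchanged.
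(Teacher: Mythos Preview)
Your proof is correct and is essentially the paper's own argument: form the compositum $C_1$ of $\mathsf{C}_N$ and $\mathsf{C}_7$ over $\mathbb{P}^1$ via $f_1$ and the Belyi map $h$ of Proposition~12, then the compositum $C_2$ of $C_1$ and $\mathsf{C}_{7n}$ via $f_3$ and $f_4$, applying Abhyankar's lemma at each stage exactly as in Proposition~10. One small wording slip in your closing remark: when $n$ is even $f_4$ still has only three branch points in $\mathbb{P}^1$ (with two ramification points of index $7n/2$ lying over one of them), but as you note this does not affect the divisibility check.
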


\begin{proof}
Let $h= h_6 \circ h_5 \circ h_4 \circ h_3 \circ h_2 \circ h_1$ where
$h_i$ are the maps in the last proposition. Note that $h$ is a Belyi map from $\mathsf{C}_7$ to $\mathbb{P}^1$ with all local ramification indices divides $2^{15}3^{10}5^413$.

\

Now consider the following diagram:

\[
\xymatrix{  \mathsf{C}_{2^{16}\cdot 3^{10} \cdot 5^4 \cdot 13n} \ar[d]^{f_1}& C_1  \ar[l]^{f_5} \ar[d]^{f_6}
& C_2 \ar[l]^{f_8} \ar[rd]^{f_7}\\
     \mathbb{P}^1 & \mathsf{C}_{7} \ar[l]^{f_2} \ar[r]^{f_3}& \mathbb{P}^1 & \mathsf{C}_{7n} \ar[l]^{f_4}\\}
\]

\

In this diagram:

(i) The map $f_1$ is the projection $y$ composed with an automorphism of $\mathbb{P}^1$ which maps three 
branch points to 0,1 and $\infty$ such that $f_1^{-1}(0)$ and $f_1^{-1}(1)$ each contains one point
with ramification index $2^{16}\cdot 3^{10} \cdot 5^4 \cdot 13n$ and $f_1^{-1}(\infty)$ contains two points with ramification indice $2^{15}\cdot 3^{10} \cdot 5^4 \cdot 13n$;

(ii) The map $f_2$ is the map $h$ above;

(iii) The curve $C_1$ is a compositum of  $\mathsf{C}_{2^{16}\cdot 3^{10} \cdot 5^4 \cdot 13n}$ and  $\mathsf{C}_{7}$
over $\mathbb{P}^1$;
 
(iv) By (i) and (ii), $f_5$ is unramified and each point in $f_2^{-1}(0,1,\infty)$ has ramification index a multiple of $n$ under $f_6$;

(v) The map $f_3$ is the projection $y$ composed with an automorphism of $\mathbb{P}^1$ which maps three 
branch points to 0,1 and $\infty$ with ramification indices 7;

(vi) The map $f_4$ is the projection $y$ composed with an automorphism of $\mathbb{P}^1$ which maps three 
branch points to 0,1 and $\infty$ with ramification indices 7n.;

(vii) The curve $C_2$ is a compositum of  $C_1$ and  $\mathsf{C}_{7n}$
over $\mathbb{P}^1$ (via map $f_3\circ f_6$ and $f_4)$;

(viii) By the computations in (iv), (v), (vi) and Abhyankar's lemma,
$f_8$ is unramified;

(ix) By (iv) and (viii), $C_2$ is an unramified cover of $\mathsf{C}_{2^{16}\cdot 3^{10} \cdot 5^4 \cdot 13n}$ which
maps subjectively onto $\mathsf{C}_{7n}$.

\end{proof}

\noindent {\bf{Proof of Theorem 4:}} 

Set $m=2^a3^b5^c$.

If $d=0$, this follows from Theorem 2.

If $d \neq 0$, then:
\begin{equation*}
\mathsf{C}_{6\cdot 13^d} \Rightarrow \mathsf{C}_{2^{16d}\cdot 3^{10d}\cdot 5^{4d}\cdot 13^d\cdot m} \Rightarrow
\mathsf{C}_{2^{16(d-1)}\cdot 3^{10(d-1)}\cdot 5^{4(d-1)} \cdot 13^{d-1}\cdot 7\cdot m}
\end{equation*}
\begin{equation*}
\Rightarrow
\mathsf{C}_{2^{16(d-2)}\cdot 3^{10(d-2)}\cdot 5^{4(d-2)} \cdot 13^{d-2}\cdot 7^2 \cdot m} 
\Rightarrow ... \Rightarrow
\mathsf{C}_{2^{16(d-d)}\cdot 3^{10(d-d)}\cdot 5^{4(d-d)} \cdot 13^{d-d}\cdot 7^d\cdot m}=\mathsf{C}_n.
\end{equation*}

\hfill\(\blacksquare\)

By similar construction as in Proposition 12, we can also have:

\begin{prop}
$\mathsf{C}_{6\cdot 11\cdot 43} \Rightarrow \mathsf{C}_7$.
\end{prop}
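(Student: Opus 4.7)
The plan is to imitate the construction of Proposition 12 step by step, replacing only the final Belyi map. Concretely, I would keep the composition $h_5 \circ h_4 \circ h_3 \circ h_2 \circ h_1$ exactly as in Proposition 12: this sequence maps $\mathsf{C}_7$ to $\mathbb{P}^1$ and contracts the branch set down to the five-point configuration $D_5 = \{0, 4, 13, 256, \infty\}$, introducing at each stage only local ramification indices whose prime divisors lie in $\{2, 3\}$.

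The heart of the argument is to exhibit a Belyi map $h_6' : \mathbb{P}^1 \to \mathbb{P}^1$ sending $D_5$ into $\{0, 1, \infty\}$, whose local ramification indices are supported only at primes in $\{2, 3, 5, 11, 43\}$. Following the strategy used for $h_6$ in Proposition 12, one searches among rational functions of the form
\[
h_6'(z) = \frac{(z-\alpha_1)^{n_1}(z-\alpha_2)^{n_2}(z-\alpha_3)^{n_3}}{(z-\beta_1)^{m_1}(z-\beta_2)^{m_2}(z-\beta_3)^{m_3}},
\]
where the $\alpha_i, \beta_j$ comprise the five points of $D_5$ together with auxiliary ramification points, and the exponents $n_i, m_j$ are drawn from $\{2,3,5,11,43\}$-smooth integers. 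The constraints are: the degree balance $\sum n_i = \sum m_j$, the requirement that $\{0, 4, 13, 256, \infty\}$ map into $\{0, 1, \infty\}$ (which fixes most exponents up to sign conditions), and the differential identity $dh_6'/h_6' = R(z)/\prod(z-\gamma_k)$ producing additional ramification only at the intended auxiliary points. As in Proposition 12, the auxiliary branch locations and exponent vector are extracted by a computer search; this is the genuinely new input here.

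Once $h_6'$ is produced, let $h' = h_6' \circ h_5 \circ h_4 \circ h_3 \circ h_2 \circ h_1 : \mathsf{C}_7 \to \mathbb{P}^1$. Then $h'$ is a Belyi map whose local ramification indices all divide some integer $N = 2^A \cdot 3^B \cdot 5^C \cdot 11^a \cdot 43^b$. By Proposition 7, Proposition 10 (used to absorb the factor 5 and to pump up the powers of $2, 3$), together with the obvious compositum argument that the prime factors 11 and 43 already sit in $\mathsf{C}_{6\cdot 11 \cdot 43}$ via its $y$-projection, one obtains a chain $\mathsf{C}_{6 \cdot 11 \cdot 43} \Rightarrow \mathsf{C}_N$ exactly as in the passage $\mathsf{C}_{6 \cdot 13} \Rightarrow \mathsf{C}_{2^{15} 3^{10} 5^4 \cdot 13}$ at the end of Proposition 12. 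A final application of Abhyankar's lemma to the $y$-projection $\mathsf{C}_N \to \mathbb{P}^1$ (ramified to order $N$ above each of three points) and to $h'$ gives an unramified cover of $\mathsf{C}_N$ surjecting onto $\mathsf{C}_7$, and chaining this with $\mathsf{C}_{6 \cdot 11 \cdot 43} \Rightarrow \mathsf{C}_N$ yields $\mathsf{C}_{6 \cdot 11 \cdot 43} \Rightarrow \mathsf{C}_7$.

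The main obstacle is the existence step for $h_6'$: there is no a priori reason the Diophantine system cut out by Belyi's formula on the fixed five-point set $D_5$ should admit a solution whose exponents are $\{2,3,5,11,43\}$-smooth. The fact that it does is a computational miracle of the same flavor as the explicit $h_6$ in Proposition 12, and its verification relies on producing the rational function and checking the ramification profile directly, as was done there.
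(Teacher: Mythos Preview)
Your approach is exactly the paper's: keep $h_1,\dots,h_5$ from Proposition~12 unchanged, replace $h_6$ by a Belyi map on $D_5=\{0,4,13,256,\infty\}$ whose ramification is $\{2,3,5,11,43\}$-smooth, and finish via Propositions~7 and~10 together with Abhyankar's lemma. You have correctly identified that the only nontrivial content is the existence of such an $h_6'$, and that this is a computational input rather than a structural argument.

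The paper fills precisely this gap by exhibiting
\[
h_6(z)=\frac{z^{8620425}\,(z-13)^{7208960}\,(z-56)^{1539648}}{(z-4)^{14860800}\,(z-48)^{2507760}\,(z-256)^{473}},
\]
with $\mathrm{Ram}(h_6)=\{0,4,13,48,56,256,\infty\}$ and ramification indices
$3^6 5^2\cdot 11\cdot 43,\ 2^9 3^3 5^2\cdot 43,\ 2^{17} 5\cdot 11,\ 2^4 3^6 5\cdot 43,\ 2^6 3^7\cdot 11,\ 11\cdot 43,\ 5$ respectively, so that the composite $h_6\circ\cdots\circ h_1$ is Belyi with every local index dividing $N=2^{18}3^{8}5^{2}\cdot 11\cdot 43$. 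The auxiliary points $48$ and $56$ (and the exponent $473=11\cdot 43$ at $256$) are the output of the search you described. With this in hand your final chain becomes $\mathsf{C}_{6\cdot 11\cdot 43}\Rightarrow \mathsf{C}_{N}\Rightarrow \mathsf{C}_7$, exactly as you outlined. So your proposal is correct but incomplete as written: the explicit $h_6$ is the proof.
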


\begin{proof}
We consider the same maps as in Proposition 12 except that
we replace $h_6$ by:
\begin{equation*}
\frac{z^{8620425}\cdot (z-13)^{7208960}\cdot (z-56)^{1539648}}
{(z-4)^{14860800}\cdot (z-48)^{2507760}\cdot (z-256)^{473}}
\end{equation*}
we have:

Ram($h_6$)=($0,4,13,48,56,256,\infty$) with corresponding ramification indices 

$3^6\cdot5^2\cdot11\cdot43, 2^9\cdot3^3\cdot5^2\cdot43, 2^{17}\cdot5\cdot11, 
2^4\cdot3^6\cdot5\cdot43, 2^6\cdot3^7\cdot11,11\cdot43,5$ 

and Bran($h_6$) $\cup h_6(D_5)=(0,1,\infty)$.

Thus $h_6 \circ h_5 \circ h_4 \circ h_3 \circ h_2 \circ h_1$ is a 
Belyi map with all local ramification indices dividing $2^{18}\cdot3^{8}\cdot5^2\cdot11\cdot43$ .
By Abhyankar's lemma, Propostition 7 and Proposition 10, we have:

\begin{equation*}
\mathsf{C}_{6\cdot 11\cdot 43} \Rightarrow \mathsf{C}_{2^{18}\cdot3^{8}\cdot5^2\cdot11\cdot 43} \Rightarrow \mathsf{C}_7
\end{equation*}

\end{proof}

\begin{defn}
Let $k$ be a field. Let $P$ be a subset of natural numbers and S be a subset of points on
$\mathbb{P}^1(\bar{k})$. We call a curve $C$ is $P$-ramified over S if there
exists a morphism from $C$ to $\mathbb{P}^1$ such that all branch points are
contained in $S$ and all local ramification indices are contained in $P$. Given two subsets $S_1$
and $S_2$ of points on $\mathbb{P}^1(\bar{k})$, we say $S_1$ can be $P$-contracted to $S_2$,
if there exists a morphism $f: \mathbb{P}^1 \to \mathbb{P}^1$ such that $f(S_1)$ and Bran$(f)$ is contained in $S_2$ and all local ramification indices are contained in $P$.
\end{defn}

\begin{thm}
Let $k$=$\mathbb{Q}$. If a curve $C$ is $P$-ramified over $S$ which can be $P$-contracted
to $(0,1,\infty)$ such that all numbers in $P$ only have prime divisors 2,3 or 5, then $\mathsf{C}_6\Rightarrow C$.
If we further allow $7$ appearing as prime divisors of numbers in $P$, then there exists a positive integer $n$ such that
$\mathsf{C}_{6\cdot13^n} \Rightarrow C$.
\end{thm}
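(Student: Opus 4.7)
The plan is to reduce Theorem 16 to the cyclotomic case handled by Corollary 11 (respectively Theorem 4). Let $g : C \to \mathbb{P}^1$ be the given $P$-ramified morphism with branch locus in $S$, and let $h : \mathbb{P}^1 \to \mathbb{P}^1$ be the $P$-contraction carrying $S$ into $\{0,1,\infty\}$. Composing, one obtains a Belyi-type map $f := h \circ g : C \to \mathbb{P}^1$ whose branch locus lies in $\{0,1,\infty\}$ and whose local ramification indices are products of elements of $P$; in particular their prime divisors lie in $\{2,3,5\}$ (resp.\ $\{2,3,5,7\}$). Let $M$ denote the least common multiple of the local ramification indices of $f$; its prime divisors satisfy the same restriction.

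Next, I would choose $N$ to be any multiple of $2M$ with the same prime divisors as $M$ and with $N \geq 5$. The standard projection $\mathsf{C}_N \to \mathbb{P}^1$, composed with a M\"obius transformation sending its three branch points to $0, 1, \infty$, has local ramification indices $N$, $N$, $N/2$ over these three points. Form the compositum $\tilde{C}$ of $C$ and $\mathsf{C}_N$ over $\mathbb{P}^1$. For every matching pair of ramified points, the ramification index on the $C$ side divides $M$ and hence divides $N/2$, so by Abhyankar's Lemma the natural map $\tilde{C} \to \mathsf{C}_N$ is unramified. Since $\tilde{C}$ also surjects onto $C$ as a compositum, this yields $\mathsf{C}_N \Rightarrow C$.

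In the first case of the theorem, $N \geq 5$ has prime divisors only in $\{2,3,5\}$, so Corollary 11 provides $\mathsf{C}_6 \Rightarrow \mathsf{C}_N$. In the second case, writing $N = 2^a 3^b 5^c 7^d$, Theorem 4 provides $\mathsf{C}_{6 \cdot 13^d} \Rightarrow \mathsf{C}_N$. One then invokes transitivity of $\Rightarrow$: if $X_1 \to X$ is unramified and surjects onto $Y$, and $Y_1 \to Y$ is unramified and surjects onto $Z$, then a compositum of $X_1$ and $Y_1$ over $Y$ is unramified over $X_1$ (by Abhyankar's Lemma, since $Y_1 \to Y$ is unramified) and surjects onto $Z$. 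Applying this with $(X, Y, Z) = (\mathsf{C}_6, \mathsf{C}_N, C)$ (resp.\ $(\mathsf{C}_{6 \cdot 13^d}, \mathsf{C}_N, C)$) concludes the argument, giving $n = d$ in the second case.

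The main obstacle, though a mild one, is verifying the Abhyankar divisibility hypothesis $e_C \,|\, e_{\mathsf{C}_N}$ at every matching pair of ramified points lying over $\{0,1,\infty\}$; this is automatic from the definition of $M$ and the choice of $N$ as a multiple of $2M$, with the factor of $2$ precisely accounting for the halved ramification index $N/2$ at the ``infinity'' branch point of the hyperelliptic projection. The auxiliary constraint $N \geq 5$ (needed so that $\mathsf{C}_N$ is hyperbolic and Corollary 11 / Theorem 4 applies) is met by multiplying $2M$ by an additional power of $2$ if necessary, causing no extra difficulty.
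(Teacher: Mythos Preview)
Your proposal is correct and follows essentially the same approach as the paper, which proves Theorem 16 in one line by invoking Theorem 2 and Theorem 4; you have simply unpacked that citation into the explicit Abhyankar/compositum argument (as in the proof of Proposition 17) together with the transitivity of $\Rightarrow$. The only imprecision is the phrase ``with the same prime divisors as $M$'': if $M$ happens to be odd this is literally impossible for a multiple of $2M$, but what you need (and clearly intend) is merely that the prime divisors of $N$ lie in $\{2,3,5\}$ (resp.\ $\{2,3,5,7\}$), which the choice $N=2M$ already guarantees.
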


\begin{proof}
This follows from Theorem 2 and Theorem 3.
\end{proof}

\begin{prop}
If $\mathsf{C}_6\Rightarrow \mathsf{C}_n$ holds for any positive integer $n$, then for any curve $C$,
we have $\mathsf{C}_6\Rightarrow C$.
\end{prop}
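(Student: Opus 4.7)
The plan is to combine Belyi's theorem, Abhyankar's lemma, and the transitivity of the relation $\Rightarrow$. The key reduction is that it suffices to exhibit, for some $n$, an unramified cover of $\mathsf{C}_n$ that surjects onto $C$ (i.e.\ $\mathsf{C}_n\Rightarrow C$); together with the standing hypothesis $\mathsf{C}_6\Rightarrow\mathsf{C}_n$ and transitivity, this will force $\mathsf{C}_6\Rightarrow C$.

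First I would apply Belyi's theorem to the given curve $C$ to produce a finite morphism $g\colon C\to\mathbb{P}^1$ with $\mathrm{Bran}(g)\subseteq\{0,1,\infty\}$. Let $N$ denote the least common multiple of the local ramification indices of $g$, and fix any even positive integer $n\geq 4$ with $N\mid n$.

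Next I would exhibit a morphism $m\colon\mathsf{C}_n\to\mathbb{P}^1$ with $\mathrm{Bran}(m)\subseteq\{0,1,\infty\}$ and every local ramification index divisible by $N$. Writing $\mathsf{C}_n\colon y^2=x^n-1$ with $n$ even, the $y$-projection has branch locus $\{i,-i,\infty\}$, with a unique point above each of $\pm i$ of ramification index $n$, and two points above $\infty$ each of ramification index $n/2$; composing with a Möbius transformation that carries $\{i,-i,\infty\}$ to $\{0,1,\infty\}$ yields the desired $m$. This is the same kind of auxiliary map already used in Proposition 10. By the choice of $n$, every local ramification index of $m$ (namely $n, n, n/2, n/2$) is a multiple of $N$, hence of every $e(y\mid g(y))$.

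Now I would apply Abhyankar's lemma to the pair $(g,m)$ over $\mathbb{P}^1$. The divisibility condition $e(y\mid g(y))\mid e(x\mid m(x))$ is satisfied for every pair $(x,y)$ with $m(x)=g(y)$, so a compositum of $C$ and $\mathsf{C}_n$ over $\mathbb{P}^1$ is an unramified cover of $\mathsf{C}_n$ admitting a surjection onto $C$. This gives $\mathsf{C}_n\Rightarrow C$. Transitivity of $\Rightarrow$ (established by taking a fibre product of two witnessing unramified covers, which remains unramified over the bottom curve) combined with the hypothesis $\mathsf{C}_6\Rightarrow\mathsf{C}_n$ then yields $\mathsf{C}_6\Rightarrow C$. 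The argument has no serious obstacle; the only point requiring care is the ramification data of the $y$-projection of $\mathsf{C}_n$ at infinity, which is a standard local computation.
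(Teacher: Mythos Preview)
Your approach is essentially the same as the paper's: Belyi, then $\mathsf{C}_n\Rightarrow C$ via Abhyankar using the $y$-projection of $\mathsf{C}_n$, then transitivity. The paper compresses all of this into two lines (``Let $n$ be the least common multiple of numbers in $P$. Then $\mathsf{C}_6\Rightarrow\mathsf{C}_n\Rightarrow C$''), relying implicitly on the same map you spell out.

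One small slip to fix: with the $y$-projection the local ramification indices over the three branch points are $n,\ n,\ n/2,\ n/2$, as you correctly compute. Your hypothesis ``$N\mid n$'' does \emph{not} force $N\mid n/2$, so Abhyankar's divisibility condition can fail at the branch point carrying the $n/2$'s. Simply take $n=2N$ (or any even $n$ with $2N\mid n$); then $n/2$ is still a multiple of $N$ and the argument goes through. You even flag the point at infinity as the delicate one, so this is an easy repair rather than a real gap.
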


\begin{proof}
By Belyi's theorem $C$ is $P$-ramified over $(0,1,\infty)$ for some finite set $P$.
Let $n$ be the least common multiple of numbers in $P$. Then we have:
$\mathsf{C}_6 \Rightarrow \mathsf{C}_n \Rightarrow C$.
\end{proof}

\medskip{}
In \cite{3}, we have the following conjecture:
\begin{namedthm}{Conjecture}
Let $C$ be any curve over $\bar{\mathbb{Q}}$. Then $\mathsf{C}_6\Rightarrow C$.
\end{namedthm}

We will describe a possible way to approach this conjecture in the last section.

\

\section{\label{section 3} Contraction of points on $\mathbb{P}^1(\bar{\mathbb{Q}})$}

In this section, we discuss the problem of contraction of points on $\mathbb{P}^1(\bar{\mathbb{Q}})$
with certain control on local ramification indices.

\medskip{}

The first result is from \cite{3}, theorem 4.4:

\medskip{}

\begin{thm}
Let $S$ be a finite set of points on $\mathbb{P}^1(\bar{\mathbb{Q}})$.
Then there exists a map 
\begin{equation*}
f: \mathbb{P}^1\rightarrow \mathbb{P}^1
\end{equation*}
 which is defined over $\mathbb{Q}$ such that:
\begin{equation*}
f(S)\cup \text{Ram}(f) \subset \mathbb{P}^1(\mathbb{Q})
\end{equation*}
and moreover, all local ramification indices are powers of 2.
\end{thm}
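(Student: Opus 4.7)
The plan is to follow the structure of the classical proof of Belyi's theorem, adapted to produce maps defined over $\mathbb{Q}$ with rational ramification points and power-of-$2$ local ramification indices. I would induct on the complexity $c(T) := \sum_{\alpha \in T \setminus \mathbb{P}^1(\mathbb{Q})} [\mathbb{Q}(\alpha):\mathbb{Q}]$ of the current set of algebraic points, with base case $c(T) = 0$ (where $f = \mathrm{id}$ suffices). In the inductive step I would produce a polynomial $\phi \in \mathbb{Q}[x]$ satisfying the three constraints such that the new set $\phi(T) \cup \mathrm{Bran}(\phi)$ has strictly smaller complexity, then recurse and set $f$ equal to the composition.

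The core of the inductive step is the construction of $\phi$. Pick $\alpha \in T$ of maximum degree $n$ over $\mathbb{Q}$ with minimal polynomial $p(x) \in \mathbb{Q}[x]$. The condition $\phi(\alpha) = c \in \mathbb{Q}$ is equivalent to $\phi \equiv c \pmod{p}$, i.e., $\phi = c + p(x)\, Q(x)$ for some $Q \in \mathbb{Q}[x]$; this collapses the whole Galois orbit of $\alpha$ to $c$. The derivative is $\phi'(x) = p'(x) Q(x) + p(x) Q'(x)$, and the remaining constraints ask that this split over $\mathbb{Q}$ into linear factors with multiplicities of the form $2^k - 1$, while $\deg \phi$ itself is a power of $2$ (to ensure the ramification index at $\infty$ is a power of $2$). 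With $\deg Q$ chosen so that $\deg \phi = 2^e$ for $2^e \geq n$, one has many free coefficients, and I would try to solve the resulting constraint system by combining the $n-1$ linear congruence conditions with an explicit parametrization of polynomials with prescribed derivative structure, perhaps by adapting the Shabat-polynomial formalism or the degree-$2^{k+1}$ Belyi maps $2^{2^{k+1}} x^{2^k} (1-x)^{2^k}$ that already appear in the paper's toolbox.

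The hard part will be the existence lemma for $\phi$. The linear condition $\phi \equiv c \pmod p$ must be reconciled with the highly non-generic algebraic condition that $\phi'$ has only rational roots with power-of-$2$-minus-$1$ multiplicities, and ensuring compatibility for arbitrary $p$ is delicate. A possibly cleaner alternative is to relax to a rational function $\phi(x) = p(x)^{2^k}/q(x)^{2^k}$ with $\deg q = \deg p$ and distinct subleading coefficients, which is unramified at $\infty$ by a direct Laurent expansion and has ramification index $2^k$ at every root of $p$ and of $q$, at the price of introducing the non-rational roots of $p$ as ramification points that must be resolved by subsequent iterations. Either way, the termination and interaction of constraints across iterations relies on showing that new critical values have strictly controlled degree over $\mathbb{Q}$ (bounded by $\deg \phi' = 2^e - 1$ in the polynomial approach, or by $\deg(p'q - pq') \leq 2n - 2$ in the rational-function approach), so that the complexity $c$ strictly decreases at each step.
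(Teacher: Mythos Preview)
Your plan has a genuine gap in the inductive step. You correctly isolate the existence lemma for $\phi$ as the crux, but neither of your two constructions closes it. In the polynomial approach you demand that $\phi' = (c + pQ)'$ split completely over $\mathbb{Q}$ with every multiplicity of the form $2^k-1$; this is a highly non-generic closed condition on the $2^e-n+1$ free coefficients of $Q$, and you give no argument that the affine space $\{c+pQ\}$ meets the locus of such $\phi$ at all. In the rational-function variant $\phi=(p/q)^{2^k}$, the additional critical points are the roots of $p'q-pq'$, a polynomial of degree up to $2n-2$ over $\mathbb{Q}$; their images under $\phi$ can therefore have degree as large as $2n-2$, so the complexity $c(T)$ need not decrease and the induction does not terminate. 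Your own bound ``$\deg(p'q-pq')\le 2n-2$'' already shows this: $2n-2\geq n$ for $n\geq 2$.

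The idea you are missing is that one should not try to force \emph{all} finite critical points to be rational at each step. The paper pads the minimal polynomial $f$ of the max-degree point (degree $m$) by a monic $g\in\mathbb{Q}[x]$ of degree $r=2^k-m$ so that $F=fg$ has degree $2^k$, and then imposes only the $r$ linear conditions $F'(x_1)=\cdots=F'(x_r)=0$ at freely chosen rational $x_1,\dots,x_r$. This is a nondegenerate linear system in the $r$ coefficients of $g$, and for generic $(x_i)$ the resulting $F'$ has only simple roots. Thus every finite ramification index is $2$, the index at $\infty$ is $2^k$, and the $m-1$ critical points not among the $x_i$ are the roots of the degree-$(m-1)$ rational polynomial $F'/\prod(x-x_i)$, hence have degree at most $m-1$. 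So the number of points of maximal degree $m$ in the new set strictly drops, and iteration terminates. Replacing your all-at-once rationality demand by this ``$r$ rational, the rest of strictly smaller degree'' trade-off is exactly what makes the argument go through.
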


\medskip{}

Here, we will give a simplified proof:

\begin{proof}
Denote $m=$max(deg($s$)) for $s\in S$ and assume
$x\in S$ has degree $m$.

\medskip{}

Assume $2^{k-1}\leq m < 2^k$ for some positive integer $k$. Let $r=2^k-m$ and consider polynomials
$f\cdot g_r$ where $f$ is the minimal polynomial of $x$ and $g_r$
runs over all monic polynomials of degree $r$ with rational coefficients.
Let us denote by $L_r$ the space of such polynomials.

\medskip{}

We claim that there is a polynomial $g\in L_r$ such that all finite ramification points
of $F=fg$ are simple (order 2) and there are at least $r$ rational ramification points.
Indeed, given $x_1, ... , x_r\in \mathbb{Q}$, the condition that $x_1, ... ,x_r$ are ramification
points of $F$ yields a system of $r$ linear equations on the coefficients of $g_r$ in terms
of $x_i$ and the coefficients of $f$. The corresponding system of linear equations is nondegenerate
if $\{x_1, ... ,x_r\}$ does not intersect the common roots of $f'$ and $f$. Thus we obtain a rational map defined over $\mathbb{Q}$ from $\mathbb{A}^r$ to $L_r$ and clearly each point in $L_r$ only
has a finite number of preimages. Note that a condition that for $h\in L_r$ the derivative $h'$ has
multiple roots defines a divisor $D$ in $L_r$. Therefore the preimage of $D$ can not be the whole
domain of our rational map and thus we can pick some $(x_1,...,x_r)$ such that the corresponding
$F$ satisfying our condition.

\medskip{} 

Now by our claim we can pick one such polynomial $g$ and look at the map
$F: \mathbb{P}^1\mapsto \mathbb{P}^1$ given by $F=fg$. Note that the set of ramfication points
of $F$ consists of $r$ rational points, and some other points with algebraic degree less than $m$
and the point $\infty$. Also all ramification points except $\infty$ are simple and the ramification index at
$\infty$ is $2^k$. Thus, every point in the set $S\cup F(S)\cup Ram(F)$ has algebraic degree at most $m$, and the number of points with degree $m$ in $F(S)\cup Ram(F)$ is strictly less than that for $S$. Repeating this construction, we see that the composition
of all these maps is a desired map.

\end{proof}

\

Since every curve admits a map to $\mathbb{P}^1$ with simple ramification points, we have an immediate corollary:

\begin{cor}
Let $C$ be a curve over $\bar{\mathbb{Q}}$.
Then $C$ is $P-$ramified over a finite set of points on $\mathbb{P}^1(\mathbb{Q})$ with $P$ being the subset of natural numbers containing all powers of 2.
\end{cor}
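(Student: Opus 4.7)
The plan is to reduce immediately to Theorem 19 via a generic projection. First I would choose a morphism $\pi : C \to \mathbb{P}^1$ whose ramification is everywhere simple, i.e.\ every local ramification index is $1$ or $2$. Such a $\pi$ exists: take any nonconstant map $C \to \mathbb{P}^1$ and compose it with a sufficiently generic automorphism of $\mathbb{P}^1$ (equivalently, project from a generic point if $C$ is first embedded in some projective space), so that the branch locus consists of distinct points and all preimages of each branch point are simple. Let $S \subset \mathbb{P}^1(\bar{\mathbb{Q}})$ denote the (finite) branch locus of $\pi$.

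Next I would apply Theorem 19 to the finite set $S$ to obtain a map $f : \mathbb{P}^1 \to \mathbb{P}^1$ defined over $\mathbb{Q}$ such that $f(S) \cup \mathrm{Ram}(f) \subset \mathbb{P}^1(\mathbb{Q})$ and every local ramification index of $f$ is a power of $2$. I claim $f \circ \pi : C \to \mathbb{P}^1$ is the desired map. The branch locus of $f \circ \pi$ is contained in $\mathrm{Bran}(f) \cup f(\mathrm{Bran}(\pi)) = \mathrm{Bran}(f) \cup f(S)$, which by construction is a finite subset of $\mathbb{P}^1(\mathbb{Q})$.

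It remains to check the ramification indices. For any $x \in C$, the multiplicativity of ramification in a tower gives
\begin{equation*}
e_{f\circ \pi}(x) \;=\; e_{\pi}(x)\cdot e_{f}(\pi(x)).
\end{equation*}
By the choice of $\pi$, $e_\pi(x) \in \{1,2\}$; by Theorem 19, $e_f(\pi(x))$ is either $1$ (if $\pi(x) \notin \mathrm{Ram}(f)$) or a power of $2$. In either case the product is a power of $2$, so $C$ is $P$-ramified over the finite rational set $\mathrm{Bran}(f)\cup f(S)$ with $P$ the set of powers of $2$.

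Essentially no step is a real obstacle: the multiplicativity of ramification indices is standard, and the existence of a projection with simple ramification is classical (it is used implicitly in the first lines of the proof of Theorem 19). The whole content of the corollary is packaged in Theorem 19, which handles the delicate arithmetic of pushing the branch locus into $\mathbb{Q}$ while keeping all ramification indices $2$-power; the only thing left for me to do is to absorb the extra factor of $2$ coming from the initial projection $\pi$, which is harmless since powers of $2$ are closed under multiplication by $2$.
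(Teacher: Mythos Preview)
Your argument is correct and matches the paper's one-line justification preceding the corollary: take a map $\pi:C\to\mathbb{P}^1$ with only simple ramification and compose with the map $f$ from Theorem~19. Two small remarks: composing a given map with an automorphism of $\mathbb{P}^1$ never changes ramification indices, so that clause is misleading---your parenthetical about projecting from a generic point after embedding is the correct justification; and when you assert $\mathrm{Bran}(f)\subset\mathbb{P}^1(\mathbb{Q})$, note that Theorem~19 literally gives $\mathrm{Ram}(f)\subset\mathbb{P}^1(\mathbb{Q})$, but since $f$ is defined over $\mathbb{Q}$ this immediately yields $\mathrm{Bran}(f)=f(\mathrm{Ram}(f))\subset\mathbb{P}^1(\mathbb{Q})$ as you need.
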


\medskip{}

This theorem and its corollary is a generalization of the first step in the proof of Belyi's theorem in \cite{1}.
It is natural to consider whether in the second step in the proof of Belyi's theorem, one can also impose some restriction on local ramification indices. Let us consider the case of using Belyi's functions.

\begin{defn}
We call a morphism $f: \mathbb{P}^1 \rightarrow \mathbb{P}^1$ is a Belyi function with respect to a 
$k$-tuple $(n_1, ... ,n_k)$ if:
\begin{equation}
f(x)=\prod_{i=1}^{k}(x-n_i)^{r_i}
\end{equation}
with
\begin{equation*}
\text{Ram}(f)=(n_1,...,n_k,\infty) \ \ \text{and} \ \ f(\infty)=1.
\end{equation*}
\end{defn}

\medskip{}
\begin{namedthm}{Remark}
\textup{These maps are those appearing in Belyi's second proof of his theorem in \cite{2}.
Note that for $k\geq 3$, $\infty$ is a ramification point with index $k-1$.}
\end{namedthm} 

\medskip{}
A simple observation is:

\begin{prop}
Let $P$ be the subset of natural numbers whose prime divisors
are contained in a finite set of primes $\{p_1, ... , p_s\}$. Let $S$ be a finite set of integers $\{n_1, ... , n_k\}$ plus $\infty$ such that:

(i) for any pair $(i,j)$, $n_i-n_j\in P$;

(ii) $k-1 \in P$.

Then $S$ can be $P$-contracted to $(0,1,\infty)$.

\end{prop}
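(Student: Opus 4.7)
The plan is to exhibit an explicit Belyi function in the sense of Definition 21,
\[
f(x) = \prod_{i=1}^{k}(x - n_i)^{r_i},
\]
with exponents $r_i \in \mathbb{Z}$ of mixed signs chosen so that $f$ is unramified away from $\{n_1, \ldots, n_k, \infty\}$, maps $S$ into $\{0, 1, \infty\}$, and has all local ramification indices in $P$.

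The correct choice of exponents is
\[
r_j = (-1)^{j-1}\prod_{\substack{1 \le i < i' \le k \\ i, i' \ne j}}(n_i - n_{i'}) \in \mathbb{Z},
\]
equivalently $r_j = c/\prod_{i \ne j}(n_j - n_i)$ with $c = \prod_{i < i'}(n_i - n_{i'})$. Since the $n_j$ are distinct, every $r_j$ is a nonzero integer. A standard divided-difference identity gives $\sum_{j=1}^{k} r_j n_j^{\ell} = 0$ for $\ell = 0, 1, \ldots, k-2$, and these are all nontrivial relations on a $1$-dimensional kernel of the Vandermonde, so $\sum_i r_i n_i^{k-1} \neq 0$. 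In particular $\sum r_j = 0$, and expanding in the local coordinate $y = 1/x$ at $\infty$ yields $f(y) = \prod_i(1 - n_i y)^{r_i}$; the identity $\log f = -\sum_{\ell \ge 1}(y^{\ell}/\ell)\sum_i r_i n_i^{\ell}$ then shows $f - 1$ vanishes at $\infty$ to order exactly $k - 1$. Hence $f(\infty) = 1$ with ramification index $k - 1$.

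For the remaining ramification I compute
\[
\frac{f'(x)}{f(x)} = \sum_{j=1}^{k}\frac{r_j}{x - n_j} = \frac{N(x)}{\prod_{j}(x - n_j)},
\]
where $N(x) = \sum_j r_j \prod_{i \ne j}(x - n_i)$ is a polynomial of degree at most $k - 1$. Evaluating at $x = n_j$ gives $N(n_j) = r_j \prod_{i \ne j}(n_j - n_i) = c$, so $N$ takes the same value $c$ at $k$ distinct points; therefore $N \equiv c$. It follows that $f'$ vanishes only where $f$ does, so $\mathrm{Ram}(f) = \{n_1, \ldots, n_k, \infty\}$, each $n_j$ is mapped to $0$ or $\infty$ according to the sign of $r_j$, and the local ramification index at $n_j$ is $|r_j|$.

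Finally, the ramification index at $\infty$ is $k - 1$, which lies in $P$ by hypothesis (ii). Each $|r_j|$ is a product of $\binom{k-1}{2}$ factors of the form $|n_i - n_{i'}|$, each lying in $P$ by hypothesis (i); since $P$ is multiplicatively closed, $|r_j| \in P$. Thus $f(S) \cup \mathrm{Bran}(f) \subseteq \{0, 1, \infty\}$ with every local ramification index in $P$, proving that $S$ can be $P$-contracted to $(0, 1, \infty)$. The one nontrivial step is the polynomial identity $N(x) \equiv c$, which is precisely what rules out stray finite ramification and makes this Belyi construction work cleanly; everything else is direct computation.
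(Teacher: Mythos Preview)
Your proof is correct and follows the same approach as the paper: both take the exponents $r_j$ to be (up to an irrelevant global sign) $(-1)^{j-1}V(n_1,\ldots,\hat{n_j},\ldots,n_k)$, which is exactly Belyi's choice from \cite{2}. The paper simply cites \cite{2} for the verification, whereas you supply the details---the divided-difference identity $\sum_j r_j n_j^{\ell}=0$ for $\ell\le k-2$ giving $f(\infty)=1$ with index $k-1$, and the key computation $N(x)\equiv c$ showing there is no stray ramification---so your write-up is a fleshed-out version of the same argument.
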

\begin{proof}
As in \cite{2}, in (3.1) let us take:
\begin{equation*}
r_i=(-1)^{i-1}V(n_1, ... ,\hat{n_i}, ... ,n_k)
\end{equation*}
where the term with a hat is to be omitted and $V$ denotes the Vandermonde determinant.
\end{proof}

\medskip{}

Conversely, if we use Belyi's functions to contract points, then the converse of the above
proposition is true for $k=3$:

\medskip{}
\begin{prop}
 Let $P$ be the subset of natural numbers whose prime divisors
are contained in primes $\{2, p_2, ... , p_s\}$. Let $S=\{n_1,n_2,n_3,\infty\}$. If $S$ is $P$-contracted to $(0,1,\infty)$
by some Belyi function, then for any pair $(i,j)$, we have: $n_i-n_j\in P$.

Moreover, there are only finitely many such sets $S$ modulo translation and multiplication.

\end{prop}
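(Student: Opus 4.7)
The plan is to use the explicit form of a Belyi function $f(x)=\prod_{i=1}^{3}(x-n_i)^{r_i}$ to show that the exponent vector is rationally proportional to the three pairwise differences of the $n_i$, and then to extract both the divisibility claim (under the natural normalization) and the finiteness claim (via an $S$-unit equation).

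The first step is to read off the constraints on $r_i$. The condition $f(\infty)=1$ forces $r_1+r_2+r_3=0$, and the condition $\mathrm{Ram}(f)=\{n_1,n_2,n_3,\infty\}$ amounts to asking that the numerator of the logarithmic derivative $f'/f=\sum_i r_i/(x-n_i)$ has no zero outside the $n_i$. This numerator is a polynomial of degree at most $2$; its leading coefficient equals $\sum r_i$ which already vanishes, and its value at any $n_i$ is $r_i\prod_{j\neq i}(n_i-n_j)\neq 0$, so it must be a nonzero constant, forcing the coefficient of $x$ to vanish as well, i.e.\ $\sum r_i n_i=0$. These two linear constraints cut out a one-dimensional rational subspace spanned by the Vandermonde cofactors $(n_3-n_2,\,n_1-n_3,\,n_2-n_1)$, so $r_i=\lambda(n_{i+1}-n_{i-1})$ for some $\lambda\in\mathbb{Q}^{*}$ (indices mod $3$). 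A short local computation in $u=1/x$ confirms that $f$ is also ramified at $\infty$ with index $2$, so that constraint is automatic.

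Next I would turn the hypothesis $|r_i|\in P$ into the divisibility statement. After translating so that $n_1=0$ and scaling so that $\gcd(n_2,n_3)=1$ --- the very operations covered by "translation and multiplication" --- the integer triple $(a_1,a_2,a_3)=(n_3-n_2,\,-n_3,\,n_2)$ has $\gcd=1$, so integrality of $r_i=\lambda a_i$ forces $\lambda\in\mathbb{Z}$. Then $|r_i|=|\lambda|\cdot|a_i|$ is a positive integer in $P$, and because $P$ is closed under divisors both $|\lambda|$ and $|a_i|$ lie in $P$; unwrapping the definition of $a_i$ gives $|n_i-n_j|\in P$ for every pair.

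For the finiteness claim, after this normalization I would additionally rescale to fix $n_2=1$, leaving $n_3\in\mathbb{Z}$ with $|n_3|,|n_3-1|\in P$. Since $n_3$ and $n_3-1$ are coprime integers with all prime factors in $\{2,p_2,\ldots,p_s\}$, they are $S$-units, and the equation $x-y=1$ in $S$-units has only finitely many solutions by the theorem of Mahler. The genuinely delicate point --- the one I expect to be the main obstacle --- is the normalization in the middle step: a priori the scalar $\lambda$ could carry a denominator with primes outside $P$, and one must use the multiplicative freedom to absorb this denominator before the pairwise differences themselves land in $P$.
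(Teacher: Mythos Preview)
Your argument is correct and follows essentially the same route as the paper: both derive the two linear relations $\sum r_i=0$ and $\sum r_i n_i=0$ (the paper writes the second one as $\sum_i r_i\sum_{j\neq i}n_j=0$, which is equivalent modulo the first), normalize to $n_1=0$ and $\gcd(n_2,n_3)=1$, read off that the pairwise differences divide the $r_i$ and hence lie in $P$, and finish finiteness via an $S$-unit equation.

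Two small remarks. First, after you ``additionally rescale to fix $n_2=1$'' the new $n_3=n_3/n_2$ need not be an integer (indeed $\gcd(n_2,n_3)=1$ forces it \emph{not} to be unless $|n_2|=1$); what survives is that $n_3/n_2$ and $(n_3-n_2)/n_2$ are $S$-units in $\mathbb{Q}$, which is all the unit-equation argument needs. Second, your closing worry about $\lambda$ carrying a bad denominator is already disposed of by your own middle paragraph: once $\gcd(a_1,a_2,a_3)=1$ you showed $\lambda\in\mathbb{Z}$, and then $\lambda\mid r_i$ forces $|\lambda|\in P$, so there is nothing further to absorb.
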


\begin{proof}
Let $f$ be a Belyi function with respect to $(n_1,n_2,n_3)$:
\begin{equation*}
f(x)=\prod_{i=1}^{3} (x-n_i)^{r_i}.
\end{equation*}

We have:
\begin{equation}
r_1+r_2+r_3=0
\end{equation}
and
\begin{equation}
(n_2+n_3)r_1+(n_1+n_3)r_2+(n_1+n_2)r_3=0
\end{equation}

\noindent with
\begin{equation*}
r_i \in P.
\end{equation*}

Modulo translation and multiplication, we may assume $n_1=0$ and $(n_2,n_3)$=1. From (3.2) and (3.3),
we have:
\begin{equation*}
n_2=r_3, n_3=-r_2, r_1=n_3-n_2 \ \text{and} \ (r_2,r_3)=1.
\end{equation*}

Hence, $n_2$, $n_3$ and $n_2-n_3$ are all in $P$.

Moreover, since (3.2) can be transformed into a unit equation in $\{2,p_2,...,p_s\}-$units, it only has
finitely many coprime solutions which means such $S$ are finite modulo translation and multiplication. (See Theorem 7.4.2 in \cite{7})
\end{proof}

\medskip{}

\begin{namedthm}{Remark}
\textup{
From this proposition, we see that in the case of $k=3$ if we use Belyi functions to contract points on $\mathbb{P}^1(\mathbb{Z})$, then the prime divisors of local ramification indices are depended on the prime divisors of pairwise differences between these points.
}
\end{namedthm}

\medskip{}
However, starting with $k=4$, we have exceptional examples. Let us see one example:

\medskip{}
\begin{namedthm}{Example}
Let $P$ be the subset of natural numbers whose prime divisors
are contained in $\{2,3\}$ and $S=\{0,1,5,6\}$. Then we have the following Belyi function
with respect to this 4-tuple:

\begin{equation*}
f(x)=\frac{(x-1)^3(x-6)^2}{x^2(x-5)^3}.
\end{equation*}

\noindent Hence, $S$ can be $P-$contracted to $(0,1,\infty)$ but $5$, which is the difference between 5 and 0, is not in $P$.

\end{namedthm}

\medskip{}
Although for $k\geq 4$ there are some exceptional examples, we have the following:

\medskip{}
\begin{thm}
Let $P$ be a subset of natural numbers containing prime divisors
$3, p_2, ... , p_s$. Then the set of collections of 4-tuples $(n_1, n_2, n_3 ,n_4)$ plus $\infty$ which can be $P$-contracted
to $(0,1,\infty)$ by some Belyi's functions are contained in some finite union of hyperplanes in
$\mathbb{A}^4(\mathbb{Z})$. (Modulo translation and multiplication, it's contained in some finite union of
lines in $\mathbb{A}^2(\mathbb{Q})$) Moreover, the number of such 4-tuples which do not satisfy condition (i) in Proposition 23 is infinite modulo translation
and multiplication.

\end{thm}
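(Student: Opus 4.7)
The plan is to exploit the Vandermonde parametrization of admissible Belyi exponents to reduce the $P$-contraction condition to a 3-term $S$-unit equation, then invoke the $S$-unit equation theorem to separate finitely many sporadic solutions from three one-parameter degenerate families that turn out to cut out affine hyperplanes on the $n_i$.

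For $f(x)=\prod_{i=1}^{4}(x-n_i)^{r_i}$, the factorization $f'(x)=\prod(x-n_i)^{r_i-1}Q(x)$ with $Q(x)=\sum_i r_i\prod_{j\ne i}(x-n_j)$ shows that $f$ is a Belyi function (ramification confined to $\{n_1,\ldots,n_4,\infty\}$) precisely when $Q$ is a nonzero constant. Setting its top three coefficients to zero gives the Vandermonde-type system $\sum_i r_i n_i^{e}=0$ for $e=0,1,2$, whose 1-dimensional kernel is spanned by $r_i=(-1)^{i-1}V_i$ with $V_i=V(n_1,\ldots,\hat{n_i},\ldots,n_4)$, as in Proposition 22. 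By Riemann--Hurwitz the ramification index at $\infty$ is $k-1=3\in P$, so the $P$-contraction condition reduces to $|r_i|\in P$ for every $i$; writing $r_i=(-1)^{i-1}V_ic$ with $c\in\mathbb{Q}^{\ast}$, this is equivalent to every ratio $V_i/V_j$ being an $S$-unit for $S=\{3,p_2,\ldots,p_s\}$.

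The identity $V_1-V_2+V_3-V_4=0$ (just $\sum r_i=0$ applied to the Vandermonde solution) becomes, upon dividing by $V_4$, the $S$-unit equation
\begin{equation*}
a+b-c=1, \qquad a=V_1/V_4,\ b=V_3/V_4,\ c=V_2/V_4.
\end{equation*}
Theorem 7.4.2 of \cite{7} (the same result cited in Proposition 23) guarantees that the solutions with no vanishing proper subsum form a finite set, while the degenerate solutions split into three one-parameter families: (A) $V_1=V_4$ and $V_2=V_3$; (B) $V_1=V_2$ and $V_3=V_4$; (C) $V_1=-V_3$ and $V_2=-V_4$. After normalizing $n_1=0,\ n_2=1,\ n_3=\lambda,\ n_4=\mu$ so that the $V_i$ become explicit cubics in $(\lambda,\mu)$, a direct calculation shows that each case cuts out -- once the forbidden factors $\lambda=0,\mu=0,\mu=1,\lambda=\mu$ are discarded -- exactly one linear relation: $\mu=\lambda+1$, $\lambda+\mu=1$, and $\mu=\lambda-1$, respectively. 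In the original coordinates these are the three hyperplanes $n_1+n_4=n_2+n_3$, $n_1+n_2=n_3+n_4$, and $n_1+n_3=n_2+n_4$, descending to three lines in the 2-dimensional quotient by translation and multiplication.

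For the infinitude statement, I would take $(n_1,n_2,n_3,n_4)=(0,\ 3^k-1,\ -(3^k+1),\ -2)$ for $k\ge 1$ on the hyperplane $n_1+n_4=n_2+n_3$; a short calculation yields $V_1=V_4=2\cdot 3^k(3^{2k}-1)$ and $V_2=V_3=2(3^{2k}-1)$, and choosing $c=3/(2(3^{2k}-1))$ produces exponents $(3^{k+1},-3,3,-3^{k+1})$, each of absolute value a power of $3$ at least $3$, so $|r_i|\in P$ and every $n_i$ is genuinely ramified. On the other hand $|n_4-n_1|=2$ and $|n_2-n_1|=3^k-1$, and by a Zsygmondy-type argument $3^k-1$ has a prime factor outside any fixed finite $S$ for all but finitely many $k$, so condition (i) of Proposition 23 fails; distinct values of $k$ give affinely inequivalent 4-tuples, as witnessed by the varying cross-ratio $(3^k+1)^2/(4\cdot 3^k)$. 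The main technical obstacle is the reduction of (A)--(C) to single linear relations -- each is a priori cut out by a pair of cubics in $(\lambda,\mu)$, but the extraneous factors turn out to be exactly the forbidden coincidences $n_i=n_j$, so precisely one linear factor survives.
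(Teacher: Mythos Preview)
Your argument is structurally the paper's: both reduce to the four-term $S$-unit equation $r_1+r_2+r_3+r_4=0$ (your $V_1-V_2+V_3-V_4=0$ is literally the same relation after the Vandermonde substitution $r_i=(-1)^{i-1}V_ic$), invoke the unit-equation theorem, and analyze the three $2$--$2$ vanishing-subsum splittings to extract the hyperplanes $n_i+n_j=n_k+n_l$. One step is left implicit, though: for the finitely many \emph{non-degenerate} $S$-unit solutions you say only that they ``form a finite set'', but a fixed triple $(a,b,c)=(V_1/V_4,V_3/V_4,V_2/V_4)$ imposes, a priori, cubic rather than linear conditions on $(\lambda,\mu)$, so you still owe a line explaining why each such triple confines the $n_i$ to a hyperplane. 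The paper's bookkeeping avoids this issue: working with the $r_i$ directly, each sporadic solution fixes the projective tuple $(r_1:r_2:r_3:r_4)$, and then their equation (3.5), which after using $\sum r_i=0$ is just $\sum_i r_in_i=0$, is already linear in the $n_i$. (In your coordinates the same thing happens---subtracting the two relations $V_1=aV_4$, $V_2=cV_4$ gives $\mu=\lambda(1-a+c)-c$, a line---but you should say so.)

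Your infinitude argument, by contrast, is genuinely different and more explicit than the paper's. You exhibit the family $(0,\,3^k-1,\,-(3^k+1),\,-2)$ with exponents $(3^{k+1},-3,3,-3^{k+1})$ and invoke Zsygmondy to force a large prime into $n_2-n_1=3^k-1$; the paper instead parametrizes the entire degenerate locus as $(0,\,2r_3,\,r_1+r_3,\,r_3-r_1)$ and observes that condition (i) of Proposition~23 would impose the additional unit equation $n_3/n_2+n_4/n_2=1$, which has only finitely many solutions. Your version gives concrete witnesses; the paper's shows more, namely that \emph{all but finitely many} points on each degenerate hyperplane violate condition~(i).
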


\begin{proof}
Let $f$ be a Belyi function with respect to the 4-tuple $(n_1,n_2,n_3,n_4)$:
\begin{equation*}
f(x)=\prod_{i=1}^{4} (x-n_i)^{r_i}.
\end{equation*}

Then we have:
\begin{equation}
r_1+r_2+r_3+r_4=0
\end{equation}

and
\begin{equation}
(n_2+n_3+n_4)r_1+(n_1+n_3+n_4)r_2+(n_1+n_2+n_4)r_3+(n_1+n_2+n_3)r_4=0
\end{equation}

and
\begin{equation}
(n_2n_3+n_2n_4+n_3n_4)r_1+...+(n_1n_2+n_1n_3+n_2n_3)r_4=0
\end{equation}

with
\begin{equation*}
r_i\in P.
\end{equation*}

Since (3.4) can be transformed into a unit equation in $\{3, p_2, ... ,p_s\}$-units, we have:

Either some proper subsum of $r_1+r_2+r_3+r_4$ vanishes or it will only have finitely many
coprime solutions. For each solution in the second case, the corresponding 4-tuple is contained in the hyperplane defined by (3.5) (Although such corresponding 4-tuple may not exist). The remaining
case is either $r_1+r_2, r_1+r_3$ or $r_1+r_4$ vanishes. Without loss of generality, assume $r_1+r_2=0$ which implies $r_3+r_4=0$. Thus, (3.5) and (3.6) are reduced to:
\begin{equation}
(n_2-n_1)r_1+(n_4-n_3)r_3=0
\end{equation}

and

\begin{equation}
(n_2-n_1)(n_3+n_4)r_1+(n_4-n_3)(n_1+n_2)r_3=0.
\end{equation}

Substitute (3.7) into (3.8) yields:
\begin{equation*}
(n_2-n_1)(-n_1-n_2+n_3+n_4)=0
\end{equation*} 
which means our 4-tuple is contained in the hyperplane defined by the equation:
\begin{equation*}
-x_1-x_2+x_3+x_4=0
\end{equation*} 

Moreover, from (3.7) and (3.8), if we translate $n_1$ to 0, all solutions of (3.7) and (3.8)
are: (modulo translation and multiplication)
\begin{equation*}
n_1=0, n_2=2r_3, n_3=r_1+r_3, n_4=r_3-r_1.
\end{equation*}
with $(r_1,r_3)=1$ and all prime divisors of them are in $\{3, p_2, ... ,p_s\}$.

Therefore, we have infinitely many such 4-tuples which do not satisfy condition (i) of Proposition 23 since
the unit equation:
\begin{equation*}
\frac{n_3}{n_2}+\frac{n_4}{n_2}=1
\end{equation*}
in $\{3, p_2, ..., p_s\}$-units only have finitely many coprime solutions.
\end{proof}

\medskip{}

\begin{namedthm}{Remark}
\textup{
By similar argument, we can get similar results for $k\geq 5$. Thus,
most $k$-tuples plus $\infty$ can not be $P$-contracted to $(0,1,\infty)$ by using Belyi's functions if we let $P$ be a subset
of natural numbers whose prime divisors lie in a finite set of primes. This suggests that Question 1.4 in \cite{4} may not have an affirmative answer. 
}
\end{namedthm}

\medskip{}

Now let us discuss using elliptic curves to contract points and their relation to our
unramified curve correspondence problem.

\medskip{}
Following \cite{3}:
\begin{namedthm}{Notation}
Let $E$ and $E'$ be two elliptic curves and $\pi$ and $\pi'$ be the
standard projection to $\mathbb{P}^1$. Write:
\begin{equation*}
E \rightharpoondown E'
\end{equation*}
if Bran$(\pi')$ is projectively equivalent to a set of four points in $\pi(E[\infty])$. Here,
$E[\infty]$ is the set of torsion points on $E$.
\end{namedthm}

One of the reasons why we study such relations comes from:

\begin{thm}
Let $C'$ be a hyperbolic curve and $g: C'\rightarrow \mathbb{P}^1$ be a morphism with
\begin{equation*}
\text{Bran}(g)\subset \pi(E_n[\infty])
\end{equation*}
for some elliptic curve $E_n$. Denote by $L$ the least common
multiple of all local ramification indices of $g$. Assume we have:
\begin{equation*}
E_0\rightharpoondown E_1 \rightharpoondown ... \rightharpoondown E_n
\end{equation*}
and let $C$ be a hyperbolic curve which admits a map onto $E_0$ such that there exists
one branch point whose all local ramification indices are divisible by $2^nL$. Then we have:
\begin{equation*}
C\Rightarrow C'.
\end{equation*}
\end{thm}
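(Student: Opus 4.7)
My plan is to combine $n$ iterated ``Latt\`es descents'' of the map $g$ with a single compositum of $C$ and $C'$ over $\mathbb{P}^1$ via Abhyankar's lemma. The key observation is that for any elliptic curve $E$ and positive integer $M$, the multiplication map $[M]:E\to E$ descends along $\pi_E$ to a rational map $T_M:\mathbb{P}^1\to \mathbb{P}^1$ of degree $M^2$ characterized by $T_M\circ \pi_E=\pi_E\circ [M]$. The branch locus of $T_M$ is $\pi_E(E[2])$, its ramification indices are at most $2$, and if $R\in E$ is torsion with $MR=0$, then $T_M$ sends $\pi_E(R)$ into $\pi_E(E[2])$.

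The inductive reduction proceeds in $n$ steps. Set $h_0=g$; at step $k=1,\dots,n$, choose torsion points $R_i\in E_{n-k+1}$ with $\pi_{E_{n-k+1}}(R_i)\in \mathrm{Bran}(h_{k-1})$, let $M_k$ be a common multiple of their orders, and define $h_k=\sigma_k\circ T_{M_k}\circ h_{k-1}$, where $T_{M_k}$ is the Latt\`es map of $E_{n-k+1}$ associated to multiplication by $M_k$ and $\sigma_k\in \mathrm{PGL}_2$ realizes the projective equivalence in the relation $E_{n-k}\rightharpoondown E_{n-k+1}$. Then $\mathrm{Bran}(h_k)\subset \sigma_k\bigl(\pi_{E_{n-k+1}}(E_{n-k+1}[2])\bigr)\subset \pi(E_{n-k}[\infty])$, and the LCM of the ramification indices of $h_k$ is at most twice that of $h_{k-1}$. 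After $n$ steps, $\bar g:=h_n:C'\to \mathbb{P}^1$ satisfies $\mathrm{Bran}(\bar g)\subset \pi(E_0[\infty])$ with LCM of ramification indices dividing $2^n L$ (the source curve $C'$ is unchanged throughout).

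For the final step, use the given map $C\to E_0$ with distinguished branch point $b$ of ramification divisible by $2^n L$. Postcompose with the translation $T_{-b}:E_0\to E_0$ so that $b$ maps to $0\in E_0[2]$, then form the fiber product $C^*:=C\times_{E_0}E_0$ with this modified map on one side and $[M_0]:E_0\to E_0$ on the other, where $M_0$ is chosen large enough that every torsion $S\in E_0$ with $\pi_{E_0}(S)\in \mathrm{Bran}(\bar g)$ satisfies $M_0 S=0$. Since $[M_0]$ is \'etale, the projection $C^*\to C$ is \'etale. The composition $C^*\to E_0\xrightarrow{\pi_{E_0}}\mathbb{P}^1$ has ramification divisible by $2^n L$ at every preimage above each point of $\pi_{E_0}(E_0[M_0])\supset \mathrm{Bran}(\bar g)$, since any such preimage $(c,q)\in C^*$ forces $q\in E_0[M_0]=[M_0]^{-1}(0)$ and hence $c$ is a preimage of $b$ in $C\to E_0$. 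Taking the compositum of $C^*$ and $C'$ over $\mathbb{P}^1$ via this composition and $\bar g$, Abhyankar's lemma yields an unramified cover of $C^*$, and hence of $C$, which surjects onto $C'$; this establishes $C\Rightarrow C'$.

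The main technical point will be verifying Abhyankar's divisibility condition in the final compositum. At each branch point $z\in \mathrm{Bran}(\bar g)$, every preimage in $C^*$ must have ramification $\geq 2^n L$, while the ramification of $\bar g$ at each preimage in $C'$ divides $2^n L$, so that the divisibility holds. The factor $2^n$ in the hypothesis then corresponds exactly to the $n$-fold doubling accumulated through the Latt\`es reductions.
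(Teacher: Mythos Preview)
Your proof is correct but organizes the argument differently from the paper. The paper builds a tower of compositums starting from $C$: it first takes $C_1=C\times_{E_0}E_0$ (via $[n]$), then $C_2=C_1\times_{\mathbb{P}^1}E_1$, then $C_3=C_2\times_{E_1}E_1$ (via $[m]$), and finally $C_4=C_3\times_{\mathbb{P}^1}C'$, checking at each stage via Abhyankar that the new projection to the previous curve is \'etale and tracking how the ramification over the relevant torsion images grows. For general $n$ this yields a chain of roughly $2n+2$ compositums.

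You instead work on the $C'$ side: you collapse the passage through each $E_{n-k+1}$ into a single Latt\`es map $T_{M_k}$ on $\mathbb{P}^1$, so that after $n$ compositions you obtain $\bar g:C'\to\mathbb{P}^1$ with branch locus already inside $\pi_{E_0}(E_0[\infty])$ and all local indices dividing $2^nL$. Then only two compositums remain (the \'etale pullback $C^*=C\times_{E_0}E_0$ along $[M_0]$, and $C^*\times_{\mathbb{P}^1}C'$). This makes the role of the factor $2^n$ completely transparent---each Latt\`es map contributes at most a factor of $2$ to the ramification---and avoids ever forming compositums with the intermediate elliptic curves $E_1,\dots,E_n$ themselves. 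The paper's version is perhaps more pictorial (a single large commutative diagram), while yours is more economical and isolates the key mechanism (Latt\`es descent) cleanly. Both rest on the same two facts: multiplication-by-$M$ is \'etale, and its quotient $T_M$ on $\mathbb{P}^1$ has branch locus $\pi_E(E[2])$ with indices at most $2$.
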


\begin{proof}
Let us prove for the case $n=1$. For $n>1$, the proof is similar.
Consider the following diagram:

\[
\xymatrix{C \ar[d]^{f_3} & C_1 \ar[l]^{f_1} \ar[d]^{f_4} & C_2
\ar[l]^{f_9} \ar[dd]^{f_{10}} &C_3 \ar[l]^{f_{11}} \ar[dd]^{f_{12}}
& C_4 \ar[l]^{f_{13}} \ar[ddd]^{f_{14}}\\
E_0 & E_0 \ar[l]^{f_2} \ar[d]^{f_5} \\
 & \mathbb{P}^1 & E_1 \ar[l]^{f_6} & E_1 \ar[l]^{f_7} \ar[d]^{f_8} \\
 & & & \mathbb{P}^1 & C' \ar[l]^{f_{15}}\\}
\]

\

In this diagram:

(i) The map $f_{15}$ is $g$;

(ii) The maps $f_5$, $f_6$ and $f_8$ are the degree 2 projections
such that: 

Bran$(f_{15})\subset f_8(E_1[\infty])$, Bran$(f_6)\subset f_5(E_0[\infty])$
;

(iii) The map $f_7$ is multiplication-by-$m$ map with $f_8^{-1}(\text{Bran}(f_{15})) \subset E_1[m]$;

(iv) The map $f_2$ is multiplication-by-$n$ map with $f_5^{-1}(\text{Bran}(f_6))\subset E_0[n]$.;

(v) The map $f_3$ is a map onto $E_0$ branched at the identity element of $E_0$ with all local
ramification indices being divisible by 2$L$;
 
(vi) The curve $C_1$ is a compositum of $C$ and $E_0$. By (iv) and (v),
$f_1$ is unramified and points in $f_5^{-1}(\text{Bran}(f_6))$ have
local ramification indices $2L$ under $f_4$;

(vii) The curve $C_2$ is a compositum of $C_1$ and $E_1$.
By (vi), $f_9$ is unramified and the local ramification index of the identity element of
$E_1$ under $f_{10}$ is divisible by $L$;

(viii) The curve $C_3$ is a compositum of $C_2$ and $E_1$.
Clearly $f_{11}$ is unramified and by (vii) the local ramification indices of points in 
$f_8^{-1}(\text{Bran}(f_{15}))$ under $f_{12}$ are divisible by $L$
;

(ix) The curve $C_4$ is a compositum of $C_3$ and $C'$. From the computation
in (viii), we see that $f_{13}$ is unramified.

\medskip{}
By (vi)-(ix), we see that $C_4$ is an unramified cover of $C$ which maps
onto $C'$ and consequently we have:

\begin{equation*}
C\Rightarrow C'.
\end{equation*}
\end{proof}

\medskip{}
\begin{defn}
Given a finite set $S$ of points on $\mathbb{P}^1(\mathbb{\bar{Q}})$, we call
$S$ can be contracted to $(a,b,c,d)$ if there exist some elliptic curves $E_0=E(a,b,c,d), E_1, ... ,E_n$ with:
\begin{equation*}
E_0\rightharpoondown E_1 \rightharpoondown ... \rightharpoondown E_n
\end{equation*}
such that $S$ is projectively equilvalent to a subset in $\pi (E_n[\infty])$. Here, $\pi$ is
the standard projection of $E_n$ to $\mathbb{P}^1$.
\end{defn}

\medskip{}
\begin{thm}
Let $C$ be a curve, $p$ be an odd prime and $P$ be the subset of natural numbers whose prime divisors are less than $p$. If $C$ can be $P$-contracted to a finite set of points $S$
which can be contracted to $(a,b,c,d)$ which can be $P$-contracted to $(0,1,\infty)$, then there exists
$n\in P$ such that:
\begin{equation*}
\mathsf{C}_n\Rightarrow C.
\end{equation*}
\end{thm}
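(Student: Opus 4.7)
The plan is to reduce to Theorem 28 by constructing a hyperbolic curve $W$ that is simultaneously an unramified cover of some $\mathsf{C}_n$ with $n\in P$ and admits a map onto $E_0=E(a,b,c,d)$ with a branch point whose local ramification indices are all divisible by $2^{N}L$; here $N$ is the length of the given elliptic contraction chain $E_0\rightharpoondown\cdots\rightharpoondown E_N$ and $L$ is the least common multiple of the local ramification indices of the $P$-ramified map $g\colon C\to\mathbb{P}^1$ witnessing the $P$-contraction of $C$ to $S$.

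After composing $g$ with the projective equivalence provided by Definition 31, we may assume $\text{Bran}(g)\subseteq \pi(E_N[\infty])$. Let $h\colon\mathbb{P}^1\to\mathbb{P}^1$ realize the $P$-contraction of $(a,b,c,d)$ to $(0,1,\infty)$, let $\pi_0\colon E_0\to\mathbb{P}^1$ be the standard projection, and set $\psi:=h\circ\pi_0$. Then $\text{Bran}(\psi)\subseteq(0,1,\infty)$, and each local ramification index of $\psi$ is a product of a factor in $\{1,2\}$ (from $\pi_0$) with an element of $P$ (from $h$); since $p$ is odd we have $2\in P$, so every local ramification index of $\psi$ lies in $P$. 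Let $L'$ denote their least common multiple. Now choose $n\in P$ to be a sufficiently divisible multiple of $L'\cdot 2^N L$, and construct a morphism $\mu\colon\mathsf{C}_n\to\mathbb{P}^1$ with $\text{Bran}(\mu)\subseteq(0,1,\infty)$ and every local ramification index divisible by $L'\cdot 2^N L$; such a $\mu$ can be assembled by composing the hyperelliptic projection of $\mathsf{C}_n$ with power and M\"obius maps, in the spirit of the map $F_2$ in the proof of Proposition 7.

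Let $W$ be the compositum of $\mathsf{C}_n$ and $E_0$ over $\mathbb{P}^1$ via $\mu$ and $\psi$. For every pair $(x,y)\in\mathsf{C}_n\times E_0$ with $\mu(x)=\psi(y)=z\in(0,1,\infty)$, the divisibility $e_\psi(y\,|\,z)\mid e_\mu(x\,|\,z)$ holds by construction, so Abhyankar's lemma implies that $W\to\mathsf{C}_n$ is unramified---in particular $W$ is hyperbolic and $\mathsf{C}_n\Rightarrow W$---and that the local ramification index of $W\to E_0$ at any point above $y$ equals $e_\mu(x\,|\,z)/e_\psi(y\,|\,z)$, which by our choice of $n$ is divisible by $2^N L$. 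Theorem 28, applied with $C'=C$, the map $g$, the chain $E_0\rightharpoondown\cdots\rightharpoondown E_N$, and the curve $W$, then yields $W\Rightarrow C$; transitivity of $\Rightarrow$ gives $\mathsf{C}_n\Rightarrow C$ with $n\in P$, as required.

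The main technical point is the construction of the map $\mu$: because the required divisor $L'\cdot 2^N L$ is typically even while the $y$-projection of $\mathsf{C}_n$ gives uniform ramification $n$ at three branch points only when $n$ is odd, some care is needed for even $n$. This is a purely local ramification bookkeeping problem entirely analogous to the constructions carried out in Propositions 7, 10, and 13---one combines hyperelliptic projections with power maps like $x\mapsto x^{n/2}$ and M\"obius transformations to collect and balance the branch points over $(0,1,\infty)$, with any factor-of-$2$ losses absorbed into the choice of $n$.
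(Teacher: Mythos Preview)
Your argument is correct and follows the same strategy as the paper: build the compositum of some $\mathsf{C}_n$ with $E_0=E(a,b,c,d)$ over $\mathbb{P}^1$ to obtain an unramified cover $W$ of $\mathsf{C}_n$ that maps onto $E_0$ with one branch point carrying sufficiently divisible ramification, and then invoke the elliptic-chain theorem. A couple of small remarks: the result you cite as ``Theorem~28'' is Theorem~30 in the paper's numbering; and the most direct way to produce your map $\mu$ is not the hyperelliptic ($x$-)projection plus power maps in the style of $F_2$, but rather the $y$-projection of $\mathsf{C}_n$ composed with a M\"obius automorphism (exactly the map $f_1$ used in Propositions~7, 10, and~13), which for even $n$ already has branch locus $\{0,1,\infty\}$ with local indices $n,n,n/2$. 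With $n=2\,L'\cdot 2^{N}L$ this gives every local index divisible by $L'\cdot 2^{N}L$, which is what you need. In fact your write-up is slightly more careful than the paper's own proof, which takes $n=2LM$ and asserts ramification ``$2L$'' at the relevant branch point of $C_1\to E$, tacitly treating the elliptic chain as having length~$1$; your explicit inclusion of the factor $2^{N}$ is the correct bookkeeping for general chain length.
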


\begin{proof}
By assumption, there exists $L\in P$ and a map:
\begin{equation*}
f: C\rightarrow \mathbb{P}^1 \ \ \text{with} \ \  \text{Bran}(f)\subset S
\end{equation*}
such that all local ramification indices divide $L$.

Also there exists $M\in P$ and a map:
\begin{equation*}
g: E(a,b,c,d)=E \rightarrow \mathbb{P}^1 \ \ \text{with} \ \ \text{Bran}(g)\subset (0,1,\infty)
\end{equation*}
such that all local ramification indices of $g$ divide $M$.

Now let us consider the following diagram:
\[
\xymatrix{ \mathsf{C}_{2LM} \ar[d]^{f_1} & C_1 \ar[l]^{f_2} \ar[d]^{f_3} \\
\mathbb{P}^1 & E \ar[l]^{f_4} \\
}
\]

In this diagram:

(i) The map $f_1$ is the standard degree 2 projection combined with an automorphism of $\mathbb{P}^1$
such that $(0,1,\infty)$ are contained in the branch locus of $f_1$; 

(ii) The map $f_4$ is the map $g$;

(iii) The curve $C_1$ is a compositum of $\mathsf{C}_{2LM}$ and $E$ via $f_1$ and $f_4$.
From (i) and (ii) we see that $f_2$ is unramified and $f_3$ is a map from $C_1$ onto $E$ such that at
least one branch point have all local ramification indices $2L$. By Theorem 30, we are done.
\end{proof}

\medskip{}
A direct corollary is:

\medskip{}
\begin{cor}
Let $C$ be a curve and $P$ be the subset of natural numbers whose
prime divisors lie in $\{2,3,5\}$. If $C$ can be $P$-contracted to a finite set of points $S$ which can be contracted to $(a,b,c,d)$ which can be $P$-contracted to $(0,1,\infty)$, then we have:
\begin{equation*}
\mathsf{C}_6 \Rightarrow C.
\end{equation*}

\end{cor}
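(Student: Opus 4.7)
The plan is to deduce this corollary directly from Theorem 32 by choosing the right value of $p$, and then bridge the resulting intermediate curve $\mathsf{C}_n$ back to $\mathsf{C}_6$ using the equivalences already established in Section 2.

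First, I would take $p = 7$ in Theorem 32. With this choice the associated set of natural numbers whose prime divisors are less than $p$ is exactly $\{2,3,5\}$-smooth numbers, which matches the set $P$ in the statement of the corollary. The three hypotheses needed to invoke Theorem 32 (namely that $C$ is $P$-contracted to $S$, that $S$ is contracted to $(a,b,c,d)$ in the sense of Definition 31, and that $(a,b,c,d)$ is $P$-contracted to $(0,1,\infty)$) are built directly into the hypotheses of the corollary. So Theorem 32 yields an integer $n$ whose prime divisors lie in $\{2,3,5\}$ such that $\mathsf{C}_n \Rightarrow C$.

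Next I would eliminate the dependence on $n$. Since $n$ is a $\{2,3,5\}$-smooth integer, we may replace it by $\max(n,6)$ without loss of generality (any $\mathsf{C}_n$ with $n \geq 5$ lies over itself trivially, and enlarging $n$ to be at least $6$ only passes to a larger exponent). Corollary 11 then gives $\mathsf{C}_6 \Rightarrow \mathsf{C}_n$, and combining with $\mathsf{C}_n \Rightarrow C$ via transitivity of the relation $\Rightarrow$ yields $\mathsf{C}_6 \Rightarrow C$.

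There is no real obstacle here beyond the bookkeeping of reducing to Corollary 11: the genuine content is in Theorem 32 (which supplies an elliptic-curve construction via $\rightharpoondown$-steps and Abhyankar's lemma) and in Corollary 11 (which provides the $\mathsf{C}_6 \Rightarrow \mathsf{C}_n$ step for $\{2,3,5\}$-smooth $n$). The only subtle point to verify is that the $n$ produced by Theorem 32 is large enough (at least $5$) so that $\mathsf{C}_n$ is a hyperbolic hyperelliptic curve and Corollary 11 applies; if the construction produces a small $n$, one simply multiplies by a $\{2,3,5\}$-smooth factor to push $n$ into the range of applicability, which only strengthens the assertion $\mathsf{C}_n \Rightarrow C$ via a further application of Corollary 11 in the opposite direction (or by using the equivalence statement of Theorem 2).
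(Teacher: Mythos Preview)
Your approach is correct and matches the paper's: the corollary is stated there with no proof, simply as a ``direct corollary'' of Theorem 32, and your argument (take $p=7$ in Theorem 32, then invoke Corollary 11/Theorem 2 to pass from $\mathsf{C}_n$ to $\mathsf{C}_6$) is exactly the intended deduction. Your caution about small $n$ is slightly overcomplicated---in the proof of Theorem 32 the integer produced is $n=2LM$ (or a further power-of-2 multiple thereof via Theorem 30), which is automatically $\geq 5$ once $L,M\geq 2$; but your remark that one can harmlessly enlarge $n$ by a $\{2,3,5\}$-smooth factor is valid and closes the edge case cleanly.
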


\medskip{}
\begin{namedthm}{Remark}
\textup{
From this corollary, we see that if we want to use elliptic curves to attack the unramified curve correspondence problem, one important thing is the intersection of the image under the standard projection of the torsion points for
two different elliptic curves on $\mathbb{P}^1$ as well as the intersection of the image under the standard projection of the torsion points for one elliptic curve and the set of roots of unity on $\mathbb{P}^1$. In general, the intersection number is always finite (see \cite{6}), but we only need to find some special elliptic curves to approach our problem.
}
\end{namedthm}

\

\section{\label{section 4} A possible procedure to approach conjecture 18}

From Proposition 17 and our proof of Theorem 2, Theorem 4 and Theorem 32, we propose a possible way
to approach Conjecture 18:

\

Step 0: We already know (by Theorem 2)that if $n\geq 5$ is a positive integer whose only prime divisors are 2,3 or 5, then we have:
\begin{equation*}
\mathsf{C}_6\Leftrightarrow \mathsf{C}_n.
\end{equation*}

\

Step I: Start with $p=7$.

\

Step II: Let us show that $\mathsf{C}_p$ is $P$-ramified over some points $S$ which can
be contracted to $(a,b,c,d)$ which can be $P$-contracted to $(0,1,\infty)$ (or more intermediate steps like these) such that all numbers in $P$ only have prime divisors less
than $p$ and deduce that:
\begin{equation*}
\mathsf{C}_6 \Rightarrow \mathsf{C}_p.
\end{equation*}

\

Step III: Use the construction in last step (which is a combination of diagrams in Proposition 10, Theorem 30 and Theorem 32) to show:
\begin{equation*}
\mathsf{C}_{mn} \Rightarrow \mathsf{C}_{pn}
\end{equation*}
for some $m$ whose prime divisors are less than $p$ and for any $n\geq 1$.

\

Step IV: Use the result in last step to show:
\begin{equation*}
\mathsf{C}_{6} \Rightarrow \mathsf{C}_{n}
\end{equation*}
for all $n$ whose prime divisors are less than or equal to p. By Proposition 5 we can conclude
that $\mathsf{C}_n$ and $\mathsf{C}_m$ are equivalent for any $n$ and $m$ whose prime
divisors are less than or equal to $p$.

\

Step V: Consider the next prime and go back to Step II.

\

If eventually we can finish the above procedure for all primes, then by Proposition 17, 
Conjecture 18 will be true.

\

Actually the only hard part of the above procedure is Step II.
Step III and Step IV can be done in a similar fashion as we did in Proposition 10, Corollary 11
Theorem 30 and Theorem 32.

\medskip{}

\begin{prop}
Suppose $\mathsf{C}_p$ is $P$-ramified over some points $S$ which can
be contracted to some 4-tuple $(a,b,c,d)$ which can be $P$-contracted to $(0,1,\infty)$ such that all numbers in $P$ only have prime divisors less
than $p$.  Then there exists some $m$ whose prime divisors are less than $p$ such that for any
$n\geq 1$, we have:
\begin{equation*}
\mathsf{C}_{mn} \Rightarrow \mathsf{C}_{pn}.
\end{equation*}
\end{prop}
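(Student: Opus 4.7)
The plan is to parameterize the construction in the proofs of Theorems 30 and 32 by an extra factor $n$, exactly in the same way that Proposition 10 parameterizes the Belyi-map construction of Proposition 9 by $n$. Let $r$ be the length of the chain $E_0 = E(a,b,c,d) \rightharpoondown E_1 \rightharpoondown \cdots \rightharpoondown E_r$ provided by the hypothesis, let $L$ be the LCM of the local ramification indices of the given map $f : \mathsf{C}_p \to \mathbb{P}^1$ realizing $\text{Bran}(f) \subset S$, and let $M$ be the LCM of the local ramification indices of the given map $g : E(a,b,c,d) \to \mathbb{P}^1$ realizing the $P$-contraction to $(0,1,\infty)$. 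Every prime divisor of $L$, of $M$, and of the multiplication maps appearing in the $\rightharpoondown$-steps is in $P$, hence less than $p$. Set $N = 2^{r+1} L M$ and $m = 2N$; every prime divisor of $m$ is then less than $p$.

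For the first step, I would reproduce the diagram of Theorems 30 and 32 with the initial cyclotomic cover replaced by $\mathsf{C}_{mn}$, taking as the starting map $f_1 : \mathsf{C}_{mn} \to \mathbb{P}^1$ the $y$-projection composed with an automorphism sending three branch points to $\{0,1,\infty\}$ with local ramification index $mn$ (or $mn/2$ where parity forces). The gcd of ramification indices at each special point is then at least $Nn$. At every compositum with some $E_i$ or with $\mathsf{C}_p$ in the chain, Abhyankar's lemma applies because the ramification on the new side still divides a corresponding multiple of $Nn$, while the ramification on the elliptic-curve or $\mathsf{C}_p$ side is unchanged from the unparameterized construction. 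One concludes that the final curve $\tilde C$ in the chain is an unramified cover of $\mathsf{C}_{mn}$ which surjects onto $\mathsf{C}_p$. Moreover, at each point of $\mathsf{C}_p$ lying over $\text{Bran}(f)$, the local ramification index of $\tilde C \to \mathsf{C}_p$ is a multiple of $n$, because the extra factor $n$ in the starting ramification propagates unchanged through the chain.

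The second step is to form one additional compositum of $\tilde C$ with $\mathsf{C}_{pn}$ over $\mathbb{P}^1$, using on one side $\tilde C \to \mathsf{C}_p \to \mathbb{P}^1$ (where $\mathsf{C}_p \to \mathbb{P}^1$ is the $y$-projection composed with an automorphism sending branch points to $\{0,1,\infty\}$ with ramification $p$) and on the other side the analogous map $\mathsf{C}_{pn} \to \mathbb{P}^1$ with ramification $pn$. At each preimage in $\tilde C$ of $\{0,1,\infty\}$, the local ramification index into $\mathbb{P}^1$ equals $p$ times a multiple of $n$, hence a multiple of $pn$. By Abhyankar's lemma, this compositum is unramified over $\tilde C$, hence over $\mathsf{C}_{mn}$, and surjects onto $\mathsf{C}_{pn}$. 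Therefore $\mathsf{C}_{mn} \Rightarrow \mathsf{C}_{pn}$.

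The main technical obstacle is the compatibility issue already present in Proposition 10: each of the three branch points of the $y$-projection $\mathsf{C}_p \to \mathbb{P}^1$ must be sent by $f$ into $\text{Bran}(f) \subset S$, so that the local ramification index of $\tilde C \to \mathsf{C}_p$ at those points is actually a multiple of $n$ rather than $1$. In Proposition 10 this was verified by explicit computation for the specific Belyi map from Proposition 9. In the abstract setting of Proposition 34 one may have to enlarge $S$ by the $f$-images of these three points, which forces a corresponding lengthening of the elliptic-curve chain (or passage to a sufficiently divisible multiplication on $E_r$) to keep the enlarged $S$ inside $\pi(E_r[\infty])$ of a suitable $E_r$. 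This adjustment only mildly perturbs the constants $r$, $L$, $M$, and hence the final value of $m$, but the form of the statement is unaffected.
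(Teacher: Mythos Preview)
Your proposal is correct and follows essentially the same approach as the paper's own proof, which is only given as a sketch: replace the single Belyi map $f_2$ in the Proposition~10 diagram by the elliptic-curve diagram of Theorems~30 and~32, keep $f_3$ and $f_4$ as the $y$-projections of $\mathsf{C}_p$ and $\mathsf{C}_{pn}$ to $\{0,1,\infty\}$, and choose $m$ large enough (with prime divisors $<p$) so that all the Abhyankar divisibility conditions hold after inserting the extra factor $n$. Your write-up is in fact more explicit than the paper's, which simply says ``do the similar computation as in Theorem~30 and Theorem~32.'' The compatibility issue you flag in your last paragraph---that the three ramification points of the $y$-projection of $\mathsf{C}_p$ must land, under the elliptic-curve tower, at points where the accumulated ramification of $\tilde C\to\mathsf{C}_p$ is a multiple of $n$---is a genuine subtlety that the paper's sketch also leaves implicit; your suggested fix (enlarging $S$ or passing to a higher multiplication map on $E_r$) is the natural one.
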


\

\begin{prop}
Assume $\mathsf{C}_n$ and $\mathsf{C}_m$ are equilvalent
for any $n$ and $m$ whose prime divisors are less than $p$. Suppose
there exists some $m$ whose prime divisors are less than $p$ such that for any
$n\geq 1$, 
$\mathsf{C}_{mn} \Rightarrow \mathsf{C}_{pn}.$
Then we have:
\begin{equation*}
\mathsf{C}_6 \Rightarrow \mathsf{C}_n
\end{equation*}
for any $n$ whose prime divisors are less than or equal to $p$.

\end{prop}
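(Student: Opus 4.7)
The plan is a straightforward induction on the largest power of $p$ dividing the target index. Any $N$ whose prime divisors are at most $p$ factors uniquely as $N = p^{a} k$ with $\gcd(k,p) = 1$, so $k$ has all prime divisors strictly less than $p$. Since the first hypothesis has to apply to $\mathsf{C}_6$ itself we may assume $p \geq 5$, so the prime divisors of $6 = 2 \cdot 3$ are less than $p$. The base case $a = 0$ is then immediate from the first hypothesis: $\mathsf{C}_6 \Leftrightarrow \mathsf{C}_N$, hence $\mathsf{C}_6 \Rightarrow \mathsf{C}_N$.

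For the inductive step, fix $a \geq 1$ and assume the conclusion is already known for all smaller exponents, i.e.\ $\mathsf{C}_6 \Rightarrow \mathsf{C}_{p^{a'} k'}$ whenever $a' < a$ and $k'$ has prime divisors less than $p$. Apply the second hypothesis with $n = p^{a-1} k$ to obtain
\[
\mathsf{C}_{m \cdot p^{a-1} k} \;\Rightarrow\; \mathsf{C}_{p^{a} k}.
\]
Rewrite the left-hand curve as $\mathsf{C}_{p^{a-1}(mk)}$, noting that $mk$ still has only prime divisors less than $p$. When $a = 1$ this is $\mathsf{C}_{mk}$, which by the first hypothesis is equivalent to $\mathsf{C}_6$; when $a \geq 2$ the inductive hypothesis applied with $k' = mk$ gives $\mathsf{C}_6 \Rightarrow \mathsf{C}_{p^{a-1}(mk)}$ directly. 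In either case, chaining with the displayed implication yields $\mathsf{C}_6 \Rightarrow \mathsf{C}_{p^{a} k}$, completing the induction.

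The only implicit ingredient is transitivity of $\Rightarrow$: given an unramified cover $\tilde{C} \to \mathsf{C}_6$ surjecting onto $\mathsf{C}_A$ and an unramified cover $\tilde{C}' \to \mathsf{C}_A$ surjecting onto $\mathsf{C}_B$, any irreducible component of the fiber product $\tilde{C} \times_{\mathsf{C}_A} \tilde{C}'$ which dominates both factors is unramified over $\tilde{C}$ (being a pullback of an unramified morphism), hence unramified over $\mathsf{C}_6$, and still surjects onto $\mathsf{C}_B$. With this standard fact and the two hypotheses in hand, there is no genuine obstacle: the argument is pure bookkeeping on the $p$-adic valuation of the index, and all the real content of the proposition lies in the two input hypotheses that were arranged in the previous steps of the procedure.
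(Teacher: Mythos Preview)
Your argument is correct and matches the paper's approach: the paper only sketches the proof, saying it is ``the same way as the proof of Corollary 11 (Instead of repeating using Proposition 10, this time we repeat using [the hypothesis $\mathsf{C}_{mn}\Rightarrow\mathsf{C}_{pn}$])'', which unwinds to exactly the chain your induction on the $p$-adic valuation produces. Your explicit handling of the base case, the observation that $p\geq 5$ is needed for $\mathsf{C}_6$ to fall under the first hypothesis, and the remark on transitivity of $\Rightarrow$ are all appropriate details that the paper leaves implicit.
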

\begin{proof} (Sketch)
As mentioned, it is similar as the proof of Proposition 10, Corollary 11, Theorem 30 and Theorem 32. For the proof of Proposition 35, we will use a diagram similar as in the proof of Proposition 10.
Replace $\mathsf{C}_5$ by $\mathsf{C}_p$ and $\mathsf{C}_{5n}$ by $\mathsf{C}_{pn}$. The maps  $f_3$ and $f_4$ are still the projection $y$ composed with an automorphism of $\mathbb{P}^1$ such that both of them have branch points $\{0,1,\infty\}$. The difference is in Proposition 10, $f_2$ is a map from $\mathsf{C}_5$ to $\mathbb{P}^1$. Here we do not have such a map.
Instead under our assumption, $f_2$ will be replaced by a diagram which is a combination of the diagrams in Theorem 30 and Theorem 32. Also we can find one desired positive integer $m$ as in the proof of Theorem 32. Now Proposition 35 will be established if we do the similar computation as in Theorem 30 and Theorem 32. For Proposition 36, we can prove it in the same way as the proof
of Corollary 11 (Instead of repeating using Proposition 10, this time we repeat using Proposition 35).

\end{proof}

\

\begin{namedthm}{Remark}
\textup{Finally, let us describe a directed graph structure between all hyperbolic curves.
 We regard each hyperbolic
curve as a point in our graph.
If $C_1$ and $C_2$ are two hyperbolic curves such that $C_1$ implies $C_2$, then we associate
a directed edge from $C_1$ to $C_2$. If they are equilvalent, then we associate a simple
edge between $C_1$ and $C_2$. In this way, Conjecture 18 can be formulated as:
This graph is strongly connected. Even if Conjecture 18 does not hold, it is still interesting
to investigate the structure of subsets of coprime number $m$ and $n$ with different domination
areas of $\mathsf{C}_m$ over $\mathsf{C}_n$ and also modular curves $X(n)$. Proposition 12 and 14 are two examples of this.}
\end{namedthm}

\

\textbf{Acknowledgments.} The first author was partially supported by the Russian Academic Excellence Project '5-100' and by Simons Travel Grant. The second author was supported by the
MacCracken Program offered by New York University.

\

\

\

Fedor Bogomolov\\
Courant Institute of Mathematical Sciences, New York University\\
251 Mercer Street, New York, NY 10012, USA\\
Email: bogomolo@cims.nyu.edu

\medskip{}
Also:\\
National Research University, Higher School of Economics, Russian Federation.

\medskip{}
Jin Qian\\
Courant Institute of Mathematical Sciences, New York University\\
251 Mercer Street, New York, NY 10012, USA\\
Email: jq333@nyu.edu
\end{document}